\documentclass[11pt]{article}
\usepackage[utf8]{inputenc} 
\usepackage[T1]{fontenc}
\usepackage{geometry}
\usepackage[UKenglish]{babel} 
\usepackage{hhline}
\usepackage{amsmath} 
\usepackage{amsthm}
\usepackage{amsfonts}
\usepackage{amssymb}
\usepackage{graphicx}

\usepackage[bookmarksopen=true, hidelinks]{hyperref}
\usepackage{bookmark}
\usepackage[toc,page]{appendix}
\usepackage{url}
\usepackage{color}
\usepackage{shadow}
\usepackage{fancybox}
\usepackage[babel=true,autostyle=false, style=english]{csquotes}
\MakeOuterQuote{"} 

\usepackage{bm}
\usepackage{pxfonts}
\usepackage{parskip} 
\usepackage{enumitem}

\geometry{
    textwidth=30pc,
    textheight=46pc,
    centering  
}

\def\N{\mathbb{N}}
\def\R{\mathbb{R}}

\def\log{\operatorname{log}}
\def\supp{\operatorname{supp}}
\def\ker{\operatorname{ker}}
\def\dim{\operatorname{dim}}
\def\ess{\operatorname{ess}}

\def\AA{\mathbb{A}}
\def\WW{\mathbb{W}}
\def\II{\bm{1}}
\def\calP{\mathcal{P}}
\def\calW{\mathcal{W}}
\def\calL{\mathcal{L}}
\def\frakM{\mathfrak{M}_0}

\newcommand{\sans}[1]{\mathsf{#1}}

\def\adj{\sans{adj}}
\def\step{\sans{step}}
\def\lift{\sans{lift}}
\def\mat{\sans{mat}}
\def\coarsen{\sans{coarsen}}

\newtheorem{theorem}{Theorem}[section] 
\newtheorem{corollary}{Corollary}[section]
\newtheorem{lemma}{Lemma}[section]
\newtheorem{proposition}{Proposition}[section]

\theoremstyle{definition}
\newtheorem{definition}{Definition}[section]
\newtheorem{example}{Example}

\date{\today}

\title{The Shortest-Path distance on graphons}

\author{
\small
Cédric Simal\textsuperscript{1},
Julien Petit\textsuperscript{2},
Timoteo Carletti\textsuperscript{1}
}

\bibliographystyle{unsrt} 
\pagestyle{plain} 

\begin{document}
\maketitle
{
\small
1. Department of Mathematics and naXys, University of Namur, Belgium\\
2. Royal Military Academy, Brussels, Belgium
}

\begin{abstract}
    We define an analogue of the shortest-path distance for graphons. The proposed method is rooted on the extension to graphons of Varadhan's formula, a result that links the solution of the heat equation on a Riemannian manifold to its geodesic distance. The resulting metric is integer-valued, and for step graphons obtained from finite graphs it is essentially equivalent to the usual shortest-path distance. We further draw a link between the Varadhan distance and the communicability distance, that contains information from all paths, not just shortest-paths, and thus provides a finer distance on graphons along with a natural isometric embedding into a Hilbert space.
\end{abstract}

\section{Introduction}

Networks have long been a versatile tool for modelling a wide variety of phenomena, from physical systems with coupled interactions, to social dynamics and the brain \cite{BarabasiNetworkScience2016,NewmanNetworks2018}. With advances in computing technology, the size of empirical networks that can effectively be recorded and processed has been steadily increasing. However, some of the largest known empirical networks, like the World Wide Web or the human brain remain challenging to deal with. 

An effective approach when dealing with such large discrete objects is to replace them with a continuous abstraction. In the context of networks, such abstractions are called \textit{graph limits}, and generally aim to describe "limit objects" for sequences of graphs of increasing size \cite{lovaszLimitsDenseGraph2006b,benjaminiRecurrenceDistributionalLimits2001,backhauszActionConvergenceOperators2022,borgsLimitsSparseConfiguration2021,borgsLpTheorySparse2019,budelRandomHyperbolicGraphs2024a}. Among these, \textit{graphons} \cite{lovaszGraphon2012} describe the limits of sequences of dense graphs, and have been used in statistics as a general non-parametric model of random graphs \cite{diaconisGraphLimitsExchangeable2008,borgsGraphonsNonparametricMethod2017a}, and in network dynamics to rigorously describe the mean-field limits of dynamical systems on networks \cite{Medvedev2013TheNH,Petit2019RandomWO,vizueteGraphonBasedSensitivityAnalysis2020,bramburgerPatternFormationRandom2023,nagpalSynchronizationRandomNetworks2024} and mean-field games \cite{Parise2018GraphonGA,cainesGraphonMeanField2021a}.

Many centrality measures and network metrics do not yet have a well-defined graphon analogue, with some notable exceptions \cite{AvellaMedinaCentralities2020,GaoFixedPointCentrality2022,klimmModularityMaximizationGraphons2022}. In particular, there still lacks an equivalent to the shortest-path distance, which is relevant to extend distance-based network metrics like closeness centrality \cite{BavelasCloseness1950,Sabidussi_1966} and graph curvature \cite{ollivierRicciCurvatureMarkov2009, linRicciCurvatureGraphs2011,vanderhoornOllivierRicciCurvatureConvergence2021} to graphons. The main obstacle to doing so is that the notion of a path on a graphon is not well-defined. More broadly, defining a distance on graphons enables their study as geometric objects \cite{lovaszRegularityPartitionsTopology2010b,lovaszSzemerediLemmaAnalyst2007a,lovaszAutomorphismGroupGraphon2014b}.

In this work, we propose a definition of shortest-path distance for graphons that leverages the \textit{Varadhan formula} \cite{varadhanBehaviorFundamentalSolution1967,varadhanDiffusionProcessesSmall1967} -- a classical result from differential geometry linking the solution of the heat equation on a Riemannian manifold with its geodesic distance function, and that has been adapted to graphs \cite{kellerNoteShorttimeBehavior2016,steinerbergerVaradhanAsymptoticsHeat2019}. The distance we obtain is integer-valued, and for a step graphon obtained from a finite graph, the distance between two points is completely described by the usual shortest-path distance on the graph.

The Varadhan distance is rather coarse, however, and to obtain a finer notion of distance on graphons, we make a link between the proposed extension of Varadhan's formula and the \textit{communicability distance} \cite{estradaCommunicabilityDistanceGraphs2012}, that contains information from all paths, not just shortest paths, and can be straightforwardly extended to the graphon case while preserving many of its properties. In particular, it comes with a natural isometric embedding of the nodes, as we will later show.

The paper is structured as follows. Section \ref{sec:graphons} provides the basic definitions of graphons, while Sections \ref{sec:graphon-metrics} and \ref{sec:varadhan} cover the existing metrics on graphons and the Varadhan formula respectively. Section \ref{sec:varadhan-graphon} defines the shortest-path distance on graphons via Varadhan asymptotics, and Section \ref{sec:communicability} extends the communicability distance to the graphon case.

In the rest of the text, $\frakM$ denotes the set of Lebesgue measurable sets $S \subset [0,1]$ with strictly positive measure $\mu(S) > 0$. The complement of a measurable set $S$ is denoted by $S^c = [0,1] \setminus S$. Unless otherwise specified, '$\supp$' denotes the support of a measurable function and equality of functions denotes equality almost everywhere.

\section{Graphons}\label{sec:graphons}

\subsection{Definitions} \label{sec:definitions}

The goal of this section is to set up the background definitions on graphon theory for the rest of the text. We refer the interested reader to the book of Lovász \cite{lovaszGraphon2012} for a comprehensive treatment of the theory.

A \textit{graphon} is a symmetric measurable function $W : [0,1]^2 \rightarrow [0,1]$ that is interpreted as the analogue of the adjacency matrix of the continuum limit of a sequence of graphs. The use of $[0,1]^2$ as the domain here is not mandatory, and a more general definition of a graphon can be provided with $W: J \times J \rightarrow [0,1]$, where $(J,\mathcal{A},\pi)$ is a standard probability space. When needed, we will refer to the pair $(J,W)$ as a \textit{generalized graphon}. In this work we will restrict ourselves to graphons supported on $[0,1]^2$, but our results can straightforwardly be extended to the general case. We use $\WW$ to denote the space of graphons.

Given a finite graph $G = (V,E)$, with vertices $V = \{1,\dots,n\}$, we can construct a graphon by viewing its adjacency matrix as a "pixel matrix" on the unit square. More precisely, let $A \in \R^{n\times n}$ be the adjacency matrix of $G$, that is, $A_{ij} = 1$ if the edge $\{i,j\}$ is in $E$, and $A_{ij} = 0$ otherwise. We denote the graphon associated with $G$ as $W_G = \lift_n (A) $, with
\begin{align*}
 \lift_n : \AA_n &\rightarrow \WW \\
 A & \mapsto \sum_{i,j=1}^n A_{ij} \II_{I^{(n)}_i \times I^{(n)}_j},
\end{align*}
where $\AA_n = \{A\in [0,1]^{n\times n}\,|\, A^T=A\}$ is the set of symmetric matrices with entries in $[0,1]$, $\II_S$ denotes the characteristic function of the set $S$ and the sets $I^{(n)}_i = [\frac{i-1}{n},\frac{i}{n})$, $i=1,\dots,n-1$, $I^{(n)}_n = [\frac{n-1}{n},1]$ form a uniform partition of the unit interval.

This construction can be generalized to allow for arbitrary partitions. Let $\calP_n$ denote the set of partitions of the unit interval into $n$ measurable sets with non-zero measure (we shall refer to those as \textit{measurable partitions} in the rest of the text). We define the map
\begin{align*}
    \step_n : \calP_n \times \AA_n &\rightarrow \WW\\
    (\{P_1,\dots,P_n\},A) &\mapsto \sum_{i,j=1}^n A_{ij} \II_{P_i \times P_j}.
\end{align*}
A graphon constructed in this way will be called a \textit{step graphon}, or a \textit{$P$-step graphon} when referring to a specific partition $P$.

We may go the opposite way and obtain a weighted adjacency matrix from a graphon and measurable partition via the map
\begin{align*}
    \mat_n : \calP_n \times \WW &\rightarrow \AA_n\\
    (\{P_1,\dots,P_n\}, W) &\mapsto w = [w_{ij}],
\end{align*}
where $w_{ij} = \frac{1}{\mu(P_i) \mu(P_j)} \int_{P_i \times P_j} W(x,y) dx dy$. When performing these operations with a given partition $P$, we will use the notations $\step_P$ and $\mat_P$. Composing these two operators, we obtain the map $\coarsen_P : \WW \rightarrow \WW = \step_P \circ \mat_P$, which produces the best approximation of $W$ by a $P$-step function with respect to the $L^2$-norm. Equivalently, $\coarsen_P$ is the orthogonal projection operator on the space of $P$-step graphons.

\subsection{Graphon isomorphism and graphon convergence}

As bounded measurable functions, graphons can naturally be compared with any $L^p$ norm, but these are agnostic to the "graph" structure of graphons. For example, two isomorphic graphs may lift to different graphons.

The space of graphons can be equipped with an appropriate distance called the \textit{cut-metric}. Given $W \in \WW$, the \textit{cut-norm} of $W$ is defined as
$$ \|W\|_\square = \sup_{S,T \subset [0,1]} \left|\int_{S\times T} W(x,y) dx dy\right|, $$
where the supremum is taken over measurable sets. The cut-metric is then defined as
$$ \delta_\square (W',W) = \inf_{\varphi} \|W'- W^\varphi \|_\square,$$
where the infimum is taken over measure preserving bijections $\varphi :[0,1] \rightarrow [0,1]$ and $W^\varphi (x,y) = W(\varphi(x),\varphi(y))$. 

The reason for this subtlety is that the map from a finite graph to its adjacency matrix is not unique, and depends on the labelling of its vertices. Given two adjacency matrices of isomorphic graphs, we can turn one into the other by applying a permutation to its rows and columns, which translates to applying a measure preserving bijection in the continuous case.

Two graphons $U,W$ are called \textit{isomorphic} if $U = W^\varphi$ almost everywhere, for some measure preserving bijection $\varphi$. It is clear that if $U$ and $W$ are isomorphic, then $\delta_\square (U,W) = 0$, but the converse is not true. Two graphons $U, W \in \WW$ are called \textit{weakly isomorphic} if $\delta_\square (U,W) = 0$ and after identifying weakly isomorphic graphons, the space of graphons is a complete metric space that is furthermore compact and contains the set of (lifted) finite graphs as a dense subset. 

The metric topology given by the cut-distance therefore induces a notion of convergence of sequences of finite graphs. Intuitively, the cut-metric captures the idea that two graphs are close if the density of edges between arbitrary pairs of subsets of vertices are close for both graphs. Visually, (see Figure \ref{fig:graphon-convergence}), this means that the black and white "pixel matrices" described in Section \ref{sec:definitions} converge to a smooth greyscale picture.

Weak isomorphism and convergence can be extended to generalized graphons, and it can be shown that any generalized graphon is weakly isomorphic to a graphon on $[0,1]$, which justifies focusing on that particular case.

\begin{figure}
    \centering
    \includegraphics[alt={Graphical representation of step graphons approximating a limiting continuous graphon.},width=0.9\textwidth]{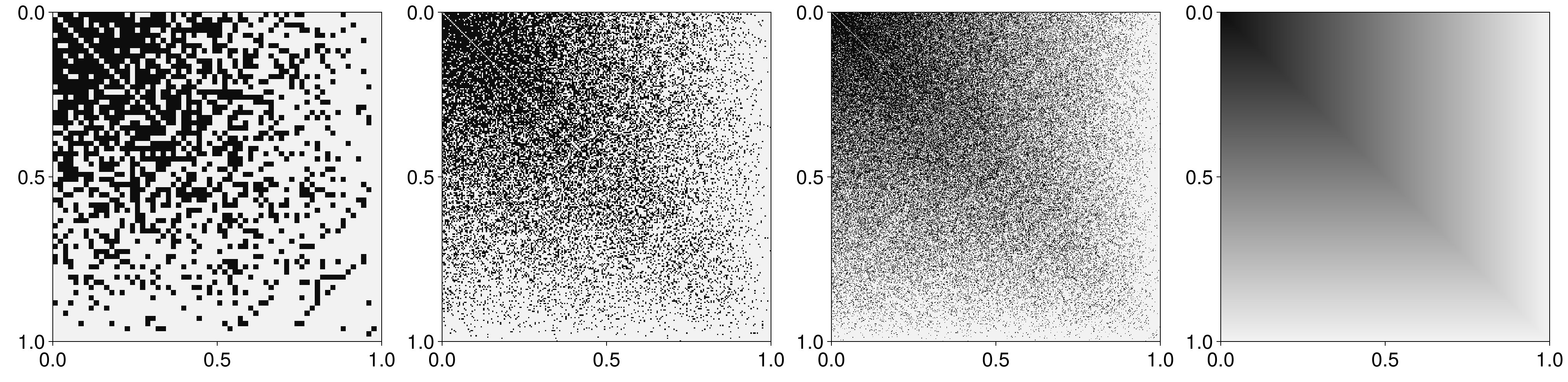}
    \caption{A sequence of finite graphs converging in the cut-metric to the graphon $W(x,y) = 1-\max(x,y)$. White pixels represent a value of zero, while black pixels represent a value of one.\\
    Alt text: Graphical representation of step graphons approximating a limiting continuous graphon.
    }
    \label{fig:graphon-convergence}
\end{figure}

\subsection{Graphon operators}

Graphons can be naturally viewed as kernels of integral operators on spaces of measurable functions $[0,1] \rightarrow \R$. Let us in particular consider the space $L^2 [0,1]$ of square integrable functions and some graphon $W \in \WW$. The \textit{adjacency operator} $\calW = \adj (W)$ is the linear operator defined by
$$ (\calW f)(x) = \int_0^1 W(x,y) f(y) dy, $$
for $f \in L^2 [0,1]$ and $x \in [0,1]$. Integer powers of the adjacency operator are also integral operators, with the integral kernel of $\calW^m$ being given by the \textit{composition power}
$$ W^{\circ m} (x,y) = \int_{[0,1]^{m-1}} W(x,z_1) \prod_{i=1}^{m-2} W(z_i,z_{i+1}) W(z_{m-1},y) dz_1 \dots dz_{m-2}. $$

If a graphon $W$ is to be interpreted as an adjacency matrix, we can define its \textit{degree function} $k$ as 
$$ k(x) = \int_0^1 W(x,y) dy = (\calW \II_{[0,1]})(x). $$
It can eventually be used to define the \textit{combinatorial Laplacian operator} $\calL$ as
$$ (\calL f)(x) = k(x) f(x) - \int_0^1 W(x,y) f(y) dy, $$
or in operator notation, as $\calL = T_k - \calW$, where $T_k$ denotes the multiplication operator by $k$.

Both the adjacency operator and the Laplacian are bounded and self-adjoint on $L^2 [0,1]$, and in addition, the adjacency operator is compact. The Laplacian operator, however, is not compact.

The following proposition shows how to interpret graphon isomorphism from the operator point of view. Despite its simplicity, we could not find it in the existing literature, and thus we prove it for the sake of completeness.
\begin{proposition} \label{prop:isomorphic-graphon-unitary}
    If two graphons are isomorphic, then their adjacency operators are unitarily equivalent.
\end{proposition}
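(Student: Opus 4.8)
The plan is to exhibit the unitary conjugation explicitly. To a measure-preserving bijection $\varphi : [0,1] \to [0,1]$ I associate the composition (Koopman) operator $V_\varphi$ on $L^2[0,1]$ defined by $(V_\varphi f)(x) = f(\varphi(x))$, and the goal is to prove the operator identity $\adj(W^\varphi) = V_\varphi\, \adj(W)\, V_\varphi^{-1}$. Since $U$ and $W$ being isomorphic means $U = W^\varphi$ almost everywhere for some such $\varphi$, and graphons agreeing almost everywhere induce the same adjacency operator, this immediately yields $\adj(U) = V_\varphi\, \adj(W)\, V_\varphi^{-1}$ with $V_\varphi$ unitary, i.e.\ unitary equivalence.

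First I would check that $V_\varphi$ is well defined and unitary. It maps $L^2[0,1]$ isometrically into itself because the change-of-variables identity $\int_0^1 g(\varphi(x))\,dx = \int_0^1 g(x)\,dx$ holds for every $g \in L^1[0,1]$ (as $\varphi$ preserves Lebesgue measure), and applying it to $g = |f|^2$ gives $\|V_\varphi f\|_2 = \|f\|_2$. For surjectivity one uses that $\varphi^{-1}$ is again a measure-preserving bijection, so that $V_{\varphi^{-1}}$ is a two-sided inverse of $V_\varphi$; hence $V_\varphi$ is unitary with $V_\varphi^{-1} = V_{\varphi^{-1}} = V_\varphi^{*}$. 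The measurability of $\varphi^{-1}$ is the one point deserving a word of justification, and it follows from the standard fact that a measurable bijection of $[0,1]$ onto itself is a Borel isomorphism modulo null sets.

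The core of the argument is then a one-line computation. For $f \in L^2[0,1]$ and $x \in [0,1]$,
\[
 \big(\adj(W^\varphi) f\big)(x) = \int_0^1 W(\varphi(x),\varphi(y))\, f(y)\, dy = \int_0^1 W(\varphi(x), z)\, f(\varphi^{-1}(z))\, dz = \big(V_\varphi\, \calW\, V_\varphi^{-1} f\big)(x),
\]
where the middle equality is the substitution $z = \varphi(y)$ and the last one merely unwinds the definitions of $\calW = \adj(W)$ and of $V_\varphi, V_\varphi^{-1}$. Reading off $\adj(W^\varphi) = V_\varphi\, \adj(W)\, V_\varphi^{-1}$ and combining with $\adj(U) = \adj(W^\varphi)$ finishes the proof.

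I do not expect a genuine obstacle here; the only delicate points are bookkeeping ones — applying the change of variables correctly to the almost-everywhere-defined integrand, and justifying that $V_\varphi$ is invertible (and not merely an isometry) via measurability of $\varphi^{-1}$. Both are routine facts about the Lebesgue measure space, but it seems worth stating them explicitly given that the proposition is included for completeness.
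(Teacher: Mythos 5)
Your proposal is correct and follows essentially the same route as the paper: you build the Koopman operator $f \mapsto f \circ \varphi$, establish its unitarity from the measure-preserving property, and verify the conjugation identity by the substitution $z = \varphi(y)$. The only cosmetic difference is that you argue unitarity via isometry plus invertibility while the paper computes the adjoint directly; both boil down to the same change-of-variables fact.
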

\begin{proof}
    Let $W \in \WW$ and $\varphi: [0,1]\rightarrow [0,1]$ a measure preserving bijection. We construct a unitary operator $U^\varphi$ on $L^2 [0,1]$ induced by $\varphi$ as
    $$ (U^\varphi f)(x) = f(\varphi(x)). $$
    The operator $U^\varphi$ is easily proved to be linear, and letting $(U^\varphi)^*$ denote the adjoint of $U^\varphi$, we have for all $f,g\in L^2 [0,1]$,
    \begin{align*}
        \langle (U^\varphi)^* f, g\rangle = \langle f, U^\varphi g\rangle &= \int_0^1 f(x) g(\varphi(x)) dx = \int_0^1 f(\varphi^{-1}(x)) g(x) dx,
    \end{align*}
    so $((U^\varphi)^* f)(x) = f(\varphi^{-1}(x))$, and it is clear that the $(U^\varphi)^* = (U^\varphi)^{-1} = U^{\varphi^{-1}}$, so $U^\varphi$ is unitary.

    Now, for $x\in [0,1]$, we compute
    \begin{align*}
        (U^\varphi \calW U^{\varphi^{-1}} f)(x) & = \int_0^1 W(\varphi(x), y) f(\varphi^{-1}(y)) dy = \int_0^1 W(\varphi(x), \varphi(y)) f(y) dy,
    \end{align*}
    so $U^\varphi \calW U^{\varphi^{-1}}$ is equal to the adjacency operator of $W^\varphi$, which is isomorphic to $W$. Since any graphon isomorphic to $W$ can be expressed in this form, we conclude the proof.
\end{proof}

\begin{corollary} \label{coro:powers-isomorphism}
    For all $n\ge 1$,
    $$ (W^\varphi)^{\circ n} = (W^{\circ n})^\varphi $$
\end{corollary}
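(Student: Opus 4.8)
The plan is to deduce the identity from Proposition~\ref{prop:isomorphic-graphon-unitary} --- or, more precisely, from the computation inside its proof --- together with the standard fact that the integral kernel of a Hilbert--Schmidt operator on $L^2[0,1]$ is determined almost everywhere, so that two integral operators with $L^2$ kernels coincide iff their kernels agree a.e.

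First I would observe that the chain of equalities in the proof of Proposition~\ref{prop:isomorphic-graphon-unitary} never used that the kernel in question was a graphon: for \emph{any} $K \in L^2([0,1]^2)$ with associated integral operator $\mathcal{K}$, the conjugate $U^\varphi \mathcal{K} U^{\varphi^{-1}}$ is again an integral operator, now with kernel $K^\varphi(x,y) = K(\varphi(x),\varphi(y))$. Taking $K = W$ recovers $\adj(W^\varphi) = U^\varphi \calW U^{\varphi^{-1}}$, and taking $K = W^{\circ n}$ will be the key instance below. Next, since $U^{\varphi^{-1}} = (U^\varphi)^{-1}$, the interior factors telescope and $(U^\varphi \calW U^{\varphi^{-1}})^n = U^\varphi \calW^n U^{\varphi^{-1}}$ for every $n \ge 1$. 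The left-hand side is $\adj(W^\varphi)^n$, whose kernel is $(W^\varphi)^{\circ n}$ by the definition of the composition power; the right-hand side is the conjugate of $\calW^n$, which has kernel $W^{\circ n}$, so by the first observation it has kernel $(W^{\circ n})^\varphi$. As the two operators are equal and the kernel is unique a.e., we get $(W^\varphi)^{\circ n} = (W^{\circ n})^\varphi$.

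I do not expect a genuine obstacle here; the only points deserving a word of care are that the kernel-conjugation identity from Proposition~\ref{prop:isomorphic-graphon-unitary} is invoked for the iterated kernel $W^{\circ n}$ (legitimate, since the computation is kernel-agnostic) and the uniqueness of $L^2$ integral kernels. As an alternative that avoids operators altogether, one can argue directly: substituting $W^\varphi(x,y) = W(\varphi(x),\varphi(y))$ into the integral defining $(W^\varphi)^{\circ n}(x,y)$ and changing variables $u_i = \varphi(z_i)$ in each of the inner integrations, the fact that $\varphi$ is measure preserving leaves every integral unchanged in form and collapses the expression to $W^{\circ n}(\varphi(x),\varphi(y)) = (W^{\circ n})^\varphi(x,y)$. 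I would include this change-of-variables computation as a short remark, since it is arguably the most transparent way to see the statement.
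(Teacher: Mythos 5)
Your main argument is precisely the paper's proof: telescope the conjugation to obtain $\calW_\varphi^n = U^\varphi\calW^n U^{\varphi^{-1}}$ and then read off the kernels on each side, using a.e.-uniqueness of $L^2$ integral kernels (you merely make explicit two details the paper leaves implicit, namely that the kernel-conjugation identity applies to any $L^2$ kernel and not just graphons, and the uniqueness of the kernel). The direct change-of-variables computation you sketch at the end is a valid and more elementary alternative, but it is not the route the paper takes.
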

\begin{proof}
    Let $\calW_\varphi = \adj(W^\varphi)$. The decomposition obtained in the previous proposition implies, for all $n\ge 1$, that
    \begin{equation} \label{eq:adjacency-power-isomorphism}
        \calW^n_\varphi = U^\varphi \calW^n U^{\varphi^{-1}}.
    \end{equation}
    Each of these operators are kernel integral operators with kernels $(W^\varphi)^{\circ n}$ and $(W^{\circ n})^{\varphi}$ respectively. From the equality \eqref{eq:adjacency-power-isomorphism}, we conclude that they must be equal almost everywhere.
\end{proof}

\subsection{Metrics on graphons}\label{sec:graphon-metrics}

The main instances of metrics defined on graphons are the \textit{neighbourhood} and \textit{similarity} distances \cite{lovaszRegularityPartitionsTopology2010b,lovaszAutomorphismGroupGraphon2014b}, which we recall here.

Let $W \in \WW$ be a graphon. The neighbourhood distance $r_W : [0,1]^2 \rightarrow \R$ is defined for almost all $x,y\in [0,1]$ as
$$ r_W (x,y) = \| W(x,\cdot) - W(y,\cdot) \|_1. $$
This is not necessary well-defined for all $x,y$, as $W(x,\cdot)$ may fail to be measurable, but only on a set of points of zero measure. There are several steps needed to obtain well-defined metric space.

First, restrict $r_W$ to the domain $I$ of points where it is well-defined, which makes $r_W$ a pre-metric. Second, there may be points $x \neq x'$ such that $W(x,\cdot) = W(x',\cdot)$ almost everywhere. It is possible to "merge" such points to construct a generalized graphon $W': J^2 \rightarrow [0,1]$ weakly isomorphic to $W$ and such that $(J,r_{W'})$ is a complete metric space \cite{lovaszRegularityPartitionsTopology2010b}. Such a graphon is then called a \textit{pure graphon}.

The neighbourhood distance owes its name to the fact that it can be interpreted as measuring the difference between the neighbourhoods of $x$ and $y$. A more interesting metric is the $2$-neighborhood distance, called the \textit{similarity distance} $\overline{r}_W = r_{W^{\circ 2}}$ that can be shown to satisfy $\overline{r}_W \le r_W$. When $(J,W)$ is a pure graphon, $(J,\overline{r}_W)$ is a metric space that is not necessarily complete. As with the neighbourhood distance, it is possible to construct a graphon $(\overline{J},\overline{W})$ on the completion $\overline{J}$ of $J$, weakly isomorphic to $(J,W)$ such that $(\overline{J},\overline{r}_{\overline{W}} )$ is again a complete metric space, with $\overline{r}_{\overline{W}} = \overline{r}_W$ on $J$.

The reward for going through all these steps is the following. The metric space $(\overline{J}, \overline{r}_{\overline{W}})$ is compact, and satisfies many useful properties. In particular, eigenfunctions of the adjacency operator are continuous in this topology \cite{lovaszRegularityPartitionsTopology2010b}.

\begin{example} \label{ex:bipartite}
    The \textit{bipartite graphon} is defined as 
    $$ W(x,y) = \begin{cases}
        1 & \text{ if } (x,y) \in I_1 \times I_2 \cup I_2 \times I_1,\\
        0 & \text{ else},
    \end{cases} $$
    where $I_1 = [0,\frac{1}{2})$ and $I_2=[\frac{1}{2},1]$. This is a step graphon with partition $I^{(2)}=\{I_1,I_2\}$ and matrix
    $$ P = \begin{bmatrix}
        0 & 1\\ 1 & 0
    \end{bmatrix}, $$
    which is the adjacency matrix of the graph with two nodes connected by an edge. The neighbourhood and similarity distances are the same, and $r_W(x,y)=1$ if $x$ and $y$ are in different sets $I_1$, $I_2$ and zero otherwise. By identifying points $x,y$ such that $r_W(x,y)=0$, we obtain a discrete metric space with two points (corresponding to $I_1$, $I_2$) at a distance of one from each other.
\end{example}

\begin{example} \label{ex:circular-band}
    The \textit{circular band graphon} is defined as
    $$ W(x,y) = \begin{cases}
        1 & \text{ if } \Delta(x,y) \le \frac{1}{4},\\
        0 & \text{ if } \Delta(x,y) > \frac{1}{4},
    \end{cases} $$
    where $\Delta(x,y) = \min(|x-y|, 1-|x-y|)$ is the normalized circular distance. This is a pure graphon, with the neighbourhood distance equal to $\Delta$. It can be shown that $W^{\circ 2}(x,y) = \frac{1}{2}-\Delta(x,y)$, so the neighbourhood and similarity distances are equivalent.
\end{example}

\section{Intrinsic metrics}\label{sec:varadhan}

As seen in the previous section, the existing distances on graphons take some care to define. The goal of the next few sections is to develop an alternative distance on graphons that is easier to work with.

\subsection{Varadhan's formula for manifolds}
Varadhan's formula is a celebrated result from differential geometry linking the fundamental solution of the heat equation on a differential manifold to its metric \cite{varadhanBehaviorFundamentalSolution1967,varadhanDiffusionProcessesSmall1967}. Its starting point is the observation that the solution of the heat equation on $\R^k$,
\begin{equation} \label{eq:heat} 
    \frac{\partial q}{\partial t} = \frac{1}{2} \sum_{i=1}^k \frac{\partial^2 q}{\partial y_i^2},
\end{equation}
with boundary condition $\lim_{t \rightarrow 0^+} q(t) = \delta_x$ (the Dirac delta at some $x \in \R^k$) depends explicitly on the Euclidean distance between $x$ and $y$. More precisely, the solution $q_t (x,y)$ with "source" $x$ is known to be given by
\begin{equation} 
    q_t(x,y) = (2\pi t)^{-k/2} \exp \left(-\frac{1}{2t} \|x-y\|^2 \right), 
\end{equation}
that is, a Gaussian function centred at $x$ whose variance increases with time. The function $q_t (x,y)$ is called the \textit{heat kernel}, and corresponds to the \textit{fundamental solution} (also known as Green's function) of \eqref{eq:heat}, in the sense that for any compactly supported smooth initial condition $f_0$, the solution $f_t$ of the heat equation can be computed as the integral transform (known as the \textit{heat semigroup})
$$ f_t(x) = \int_{\R^k} q_t (x,y) f_0(y) dy. $$
Taking the logarithm of the heat kernel, we find
$$ \log q_t (x,y) = -\frac{k}{2} (\log t + \log 2\pi) - \frac{1}{2t} \|x-y\|^2, $$
so that by multiplying by $2t$ and taking the limit as $t\rightarrow 0^+$, the Euclidean metric $d(x,y) = \|x-y\|$ can be recovered by using the formula
$$ d^2 (x,y) = \lim_{t \rightarrow 0^+} [-2t \log q_t (x,y)], $$
known as \textit{Varadhan's formula}. Varadhan \cite{varadhanBehaviorFundamentalSolution1967} further showed that applying this formula to the solution of
$$ \frac{\partial q}{\partial t} = \frac{1}{2} \sum_{i=1}^k a_{ij} (y) \frac{\partial^2 q}{\partial y_i \partial y_j}, $$
with the same initial condition yields a distance $d$ induced by a Riemannian metric derived from the coefficients $a_{ij}(y)$. Finally, replacing the Dirac delta $\delta_x$ by the characteristic function of some set $B$ yields the distance to the boundary of $B$. Beyond Varadhan's formula, there is a large body of work using the heat equation to study the "geometry" of objects other than manifolds, such as metric measure spaces and fractals \cite{molchanovDIFFUSIONPROCESSESRIEMANNIAN1975,grigoryanAnalysisFractalSpaces2022,grigoryanHeatKernelsMetric2003a}, as well as numerical applications \cite{craneHeatMethodDistance2017,sharpVectorHeatMethod2019,belyaevADMMbasedSchemeDistance2020}.

\subsection{Varadhan's formula for graphs} \label{sec:varadhan-graphs}

Let $G = (V,E)$ be an undirected graph with $n$ nodes. The heat equation on $G$ is given by
\begin{equation} \label{eq:graph-heat}
 \dot{u} = -Lu, 
\end{equation}
where for $t\ge 0$, $u(t) \in \R^n$ and $L=D-A$ is the graph combinatorial Laplacian, with $A$ the adjacency matrix of $G$, and $D$ the diagonal matrix of degrees ($D_{xx} = \sum_{y}A_{xy}$). The solution of \eqref{eq:graph-heat} with initial condition $u_0 \in \R^n$ can be explicitly written as $ u(t) = e^{-Lt} u_0$, where $e^{-Lt}$ is the matrix exponential defined by its Taylor series
\begin{equation} \label{eq:heat-kernel}
 e^{-Lt} = \sum_{k=0}^\infty \frac{(-t)^k}{k!} L^k. 
\end{equation}

The heat kernel $q_t (x,y)$ can be defined as the solution of \eqref{eq:graph-heat} with initial condition $\delta_x$, evaluated at $y$ (for $x,y \in V$), that is
$$ q_t(x,y) = (e^{-Lt})_{xy} = \langle \delta_y, e^{-Lt} \delta_x \rangle, $$
where $\delta_x \in \R^n$ denotes the canonical basis vector with entries $(\delta_x)_y = \delta_{xy}$ (this latter expression being the Kronecker delta).

Previous work studied Varadhan-type asymptotics in the case of graphs \cite{kellerNoteShorttimeBehavior2016,steinerbergerVaradhanAsymptoticsHeat2019}, showing that the short-time behaviour of the heat kernel is given by
$$ q_t (x,y) = c(x,y) t^{d(x,y)} + O(t^{d(x,y)+1}), $$
where $d(x,y)$ is the shortest path distance on the graph between $x$ and $y$, and $c(x,y)$ is a constant equal to the number of such shortest paths. This lets us state the following Varadhan formula for graphs \cite{kellerNoteShorttimeBehavior2016}:
\begin{equation} \label{eq:graph-varadhan} 
    d(x,y) = \lim_{t\rightarrow 0^+} \frac{\log q_t (x,y)}{\log t}.
\end{equation}

The intuition behind this result can be found by looking at the Taylor series of the heat kernel \eqref{eq:heat-kernel} and recalling the well-known fact that for $m \in \N$, $(A^m)_{xy}$ is equal to the number of walks of length $m$ between $x$ and $y$. In particular, $(A^m)_{xy} = 0$ for $m < d(x,y)$, and $d(x,y)$ is the smallest integer $\ell$ such that $(A^\ell)_{xy} \neq 0$, and counts the number of paths of length $\ell$ between $x$ and $y$.

A similar claim holds true for powers of the Laplacian. We show this result in the simple case where $D$ and $A$ commute (meaning vertices in the same connected component have the same degree). This implies that
$$ L^m = (D-A)^m = \sum_{k=0}^m \binom{m}{k} (-1)^{m-k} D^k A^{m-k}. $$
When $m < d(x,y)$, the $xy$ entry of each term $D^k (A^{m-k})$ is zero, hence $(L^m)_{xy} = 0$. When $m = d(x,y)$, all terms in the sum are zero, except for the first one, which is equal to $(A^{d(x,y)})_{xy}$.

In the case where $D$ and $A$ do no commute, we can no longer use the binomial formula as above, and instead the $m$-th power of the Laplacian is expressed as a sum over all possible products of $m$ terms chosen from $\{D,A\}$, e.g. $L^2 = D^2 - DA - AD + A^2$. The same reasoning applies, however. The element $(L^m)_{xy}$ will be non-zero only when $m\ge d(x,y)$.

Varadhan's formula for graphs can be proven for the more general case of graphs with up to countably many vertices and bounded degree by using the following theorem of \cite{kellerNoteShorttimeBehavior2016}, which we will use to define the shortest path distance on graphons.

\begin{theorem}[Corollary 1.4 of \cite{kellerNoteShorttimeBehavior2016}] \label{thm:keller}
    Let $L$ be a self-adjoint positive semidefinite operator on the Hilbert space $(\mathcal{H},\langle \cdot,\cdot\rangle)$. Let $f,g \in D(L^n)$ (the domain of $L^n$) for all $n\in \N$. Let
    $$ d_L (f,g) = \inf \{n \in \N | \langle f,L^n g\rangle \neq 0\}. $$
    The following statements hold.
    \begin{enumerate}
        \item If $d_L(f,g) < \infty$, then
        $$ \lim_{t\rightarrow 0^+} \frac{\log |\langle f, e^{-Lt}g \rangle|}{\log t} = d_L (f,g). $$
        \item If $d_L (f,g) = \infty$, there exists for all $n\in\N$ a constant $C_n(f,g) >0$ such that
        $$ |\langle f, e^{-Lt}g\rangle | \le C_n (f,g) t^{n+1}. $$
    \end{enumerate}
\end{theorem}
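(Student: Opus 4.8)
The plan is to route everything through the spectral theorem for (possibly unbounded) self-adjoint operators. Since $L$ is self-adjoint and positive semidefinite, it has a projection-valued spectral measure $E$ on $[0,\infty)$ with $L=\int_0^\infty \lambda\,dE(\lambda)$. Fix $f,g$ and form the complex Borel measure $\mu(S)=\langle f,E(S)g\rangle$ on $[0,\infty)$. Standard spectral calculus then gives $\langle f,L^n g\rangle=\int_0^\infty \lambda^n\,d\mu(\lambda)$ for every $n$ (valid since $g\in D(L^n)$) and $\langle f,e^{-Lt}g\rangle=\int_0^\infty e^{-\lambda t}\,d\mu(\lambda)$ for every $t>0$ (valid since $\lambda\mapsto e^{-\lambda t}$ is bounded on $[0,\infty)$). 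Writing $m_n=\int\lambda^n\,d\mu$, the definition of $d_L$ becomes $d_L(f,g)=\inf\{n:m_n\neq 0\}$.

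The engine is Taylor's theorem with remainder for $s\mapsto e^{-s}$ on $[0,\infty)$: all of its derivatives are bounded by $1$ in modulus there, so $\bigl|e^{-s}-\sum_{k=0}^{N}\frac{(-s)^k}{k!}\bigr|\le \frac{s^{N+1}}{(N+1)!}$ for every $N\in\N$. Putting $s=\lambda t$ and integrating against $\mu$ gives
\begin{equation}\label{eq:taylor-remainder}
    \Bigl|\langle f,e^{-Lt}g\rangle-\sum_{k=0}^{N}\frac{(-t)^k}{k!}\,m_k\Bigr|\le \frac{t^{N+1}}{(N+1)!}\int_0^\infty\lambda^{N+1}\,d|\mu|(\lambda),
\end{equation}
provided the last integral is finite. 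I would obtain that finiteness from a Cauchy--Schwarz inequality for spectral measures: from $|\mu(S)|=|\langle E(S)f,E(S)g\rangle|\le\|E(S)f\|\,\|E(S)g\|$, a partition argument together with monotone convergence yields $\int\lambda^{N+1}\,d|\mu|\le\bigl(\int\lambda^{N+1}\,d\mu_{f,f}\bigr)^{1/2}\bigl(\int\lambda^{N+1}\,d\mu_{g,g}\bigr)^{1/2}$, where $\mu_{h,h}(S)=\|E(S)h\|^2$; and each factor equals $\langle h,L^{N+1}h\rangle<\infty$ because $f,g\in D(L^{N+1})$.

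Both statements then drop out of \eqref{eq:taylor-remainder}. For statement (1), set $d=d_L(f,g)<\infty$ and take $N=d$: since $m_k=0$ for $k<d$, the finite sum collapses to $\frac{(-t)^d}{d!}m_d$, so $\langle f,e^{-Lt}g\rangle=\frac{(-t)^d}{d!}m_d+O(t^{d+1})$ with $m_d\neq 0$, whence $|\langle f,e^{-Lt}g\rangle|=\frac{|m_d|}{d!}\,t^{d}\,(1+O(t))$ for small $t>0$; taking logarithms, dividing by $\log t$, and letting $t\to 0^+$ gives the limit $d$. For statement (2), $d_L(f,g)=\infty$ means $m_k=0$ for all $k$, so for each $n$ the entire sum in \eqref{eq:taylor-remainder} (with $N=n$) vanishes and one may take $C_n(f,g)=\max\{1,\ (n+1)!^{-1}\int_0^\infty\lambda^{n+1}\,d|\mu|\}>0$.

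The only genuine obstacle I anticipate is the spectral-measure bookkeeping in the unbounded case: justifying the identities $\langle f,L^ng\rangle=\int\lambda^n\,d\mu$ under the stated domain hypotheses and, above all, the finiteness of $\int\lambda^{N+1}\,d|\mu|$ via the Cauchy--Schwarz step. Everything else is the elementary Taylor estimate and taking logarithms. (In the graphon application $L$ is bounded, so $E$ is supported on $[0,\|L\|]$, all these integrals are trivially finite, and the sum and integral may be interchanged freely, so this obstacle does not arise there.)
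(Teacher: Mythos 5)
The paper does not prove this statement; it quotes it verbatim as Corollary 1.4 of the cited reference \cite{kellerNoteShorttimeBehavior2016}, so there is no in-paper proof to compare against. Your argument is nonetheless correct, and it follows the same spectral route that the cited Keller--Lenz--M\"unch--Schmidt--Telcs note uses: pass to the scalar measure $\mu(S)=\langle f,E(S)g\rangle$, expand $e^{-\lambda t}$ with a Lagrange remainder on $[0,\infty)$ (where all derivatives of $e^{-s}$ are bounded by $1$ in modulus), and control the remainder moment $\int\lambda^{N+1}\,d|\mu|$ by Cauchy--Schwarz against the positive spectral measures $\mu_{f,f}$ and $\mu_{g,g}$, whose $(N+1)$-th moments are finite because $f,g\in D(L^n)$ for all $n$. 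Two small points worth being explicit about if you write this up: (i) the identity $\int\lambda^{N+1}\,d\mu_{h,h}=\langle h,L^{N+1}h\rangle$ requires only $h\in D(L^{N+1})$ together with $\int\lambda^{2(N+1)}\,d\mu_{h,h}<\infty$, both of which hold here, and a one-line Cauchy--Schwarz against the finite total mass handles the odd-power moment; and (ii) in statement (1), you should note explicitly that for $t$ small enough the leading term $\frac{|m_d|}{d!}\,t^d$ dominates the $O(t^{d+1})$ remainder so that the logarithm is well-defined and the sandwich $d\le\liminf\le\limsup\le d$ goes through, but this is routine.
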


\subsection{Families of Varadhan formulas on graphs} \label{sec:other-odes}

In the proof of the graph version of Varadhan's formula, we essentially relied on the property that $(A^m)_{xy}$ is the number of walks of length $m$ between vertices $x$ and $y$, which means that the first non-zero term in the Taylor expansion of the heat kernel is of order $d(x,y)$. The heat equation is not the only equation with this property, and we could just as well have used the simpler equation
\begin{equation} \label{eq:adjacency} 
    \dot{u} = Au,
\end{equation}
whose solution is given by $u(t) = e^{At}u_0$, where the exponential $e^{At}$ has the same dominant term in the series expansion. Numerically, this equation is less stable than the heat equation, but is easier to work with analytically, as we no longer have to deal with the degree matrix $D$ not commuting with $A$.

Yet another equation that will satisfy the Varadhan formula is
$$ \dot{u} =  (I - tA)^{-1} A (I-tA)^{-1} u, $$
whose solution is given by $u(t) = (I-tA)^{-1} u_0$, for $t \ll 1$. This illustrates that, in fact, equation \eqref{eq:adjacency} can be replaced with a generic matrix ODE, provided its solution is analytic in a neighbourhood of the origin, namely
$$ \dot{u}(t) = g(A, t) u(t), $$
such that $u(t) = f(At) u_0$, with $f$ analytic near zero. A more general form of Varadhan's formula for graphs is then given by
$$ \lim_{t\rightarrow 0^+} \frac{\log |f(At)_{xy}|}{\log t} = d(x,y). $$

This can be generalized further to a large class of Taylor series and matrices associated to the graph. More precisely, 

\begin{enumerate}
    \item The adjacency matrix can be replaced by any non-negative matrix $M \in \R_+^{n\times n}$ such that $M_{xy} = 0$ if and only if $A_{xy}=0$. The sign of $M$ can be flipped.
    \item An arbitrary diagonal matrix can be added to $M$.
    \item The function $f$ can be any analytic function near the origin such that all its Taylor coefficients are non-zero.
\end{enumerate}

We provide a formal statement and proof of this result the supplementary materials.

\section{Varadhan's formula for graphons} \label{sec:varadhan-graphon}

In the following sections, we will gradually build up to a pointwise distance on the unit interval induced by a graphon $W$. We will first need to determine conditions to obtain a well-defined distance. For a finite graph, the shortest path-distance is only well-defined if the graph is connected. We will study the equivalent notion for graphons in Section \ref{sec:graphon-connectedness}. 

Once this point has been achieved, we will use Theorem \ref{thm:keller} in Section \ref{sec:graphon-heat} to obtain a positive symmetric function $\delta_W$ on pairs of subsets in $\frakM$. This function is not a metric on $\frakM$, except when restricting its domain to members of a measurable partition.  Finally, in Section \ref{sec:varadhan-pointwise}, we will arrive at a metric $\overline{\delta}_W$ on $[0,1]$ by taking the limit of $\delta_W$ on nested neighbourhoods of the points. The main result of that section will provide a characterization of $\overline{\delta}_W$ in terms of the supports of composition powers of $W$.

\subsection{Connectedness of a graphon} \label{sec:graphon-connectedness}

Before moving on with the definition and properties of the Varadhan distance on graph\-ons, we present the following definitions, that aim at extending the notion of graph connectedness.

\begin{definition}[Connected graphon \cite{jansonConnectednessGraphLimits2008}] \label{def:connected-graphon}
A graphon $W \in \WW$ is said to be \textit{connected} if for any measurable set $U \in \mathfrak{M}_0$ such that $\mu(U) \in (0,1)$,
$$ \langle \II_U, \calW \II_{U^c} \rangle = \int_{U \times U^c} W(x,y) dx dy > 0. $$
\end{definition}

In addition to this definition, we introduce the following one with Theorem \ref{thm:keller} in mind. The main result of this section will be the equivalence of these two definitions.

\begin{definition}[Finitely connected graphon] \label{def:finitely-connected-graphon}
    A graphon $W \in \WW$ is \textit{finitely connected} if for any $U,V \in \mathfrak{M}_0$, there exists $m \in \N$, such that
    $$ \langle \II_U, \mathcal{W}^m \II_V \rangle > 0.  $$

    If moreover, there exists some $M \in \N$ such that for any $U$, $V$, we have $m \in \N$ as above with $m \le M$, then $W$ is said to be \textit{uniformly finitely connected} and the smallest such $M$ is called the \textit{diameter} of $W$.
\end{definition}

A graphon $W$ is said to be \textit{trivially connected} if it is positive almost everywhere. Trivially connected graphons are both connected and finitely connected. 

\begin{lemma} \label{lemma:not-connected}
    If $W$ is not connected, then $W^{\circ m}$ is not connected for all $m \ge 1$.
\end{lemma}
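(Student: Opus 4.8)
The plan is to unfold Definition \ref{def:connected-graphon} on the "not connected" side and then observe that a disconnecting set produces an invariant subspace for the adjacency operator, which is automatically inherited by all its powers. So suppose $W$ is not connected. Then there is a measurable set $U$ with $\mu(U) \in (0,1)$ and $\int_{U \times U^c} W(x,y)\,dx\,dy = 0$. Since $W \ge 0$, Tonelli's theorem lets me pass from this vanishing double integral to the pointwise statement: $W(x,y) = 0$ for almost every $(x,y) \in U \times U^c$ (and, by symmetry of $W$, also on $U^c \times U$, though I will not need that).

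Next I would show that the closed subspace $L^2(U^c) = \{ f \in L^2[0,1] : f = 0 \text{ a.e. on } U \}$ is invariant under $\calW = \adj(W)$. Indeed, if $f \in L^2(U^c)$, then for almost every $x \in U$ we have $(\calW f)(x) = \int_{U^c} W(x,y) f(y)\,dy = 0$, because for almost every such $x$ the section $W(x,\cdot)$ vanishes almost everywhere on $U^c$; hence $\calW f \in L^2(U^c)$. Iterating, $\calW^m \II_{U^c} \in L^2(U^c)$ for every $m \ge 1$. Since $\II_U$ belongs to the orthogonal complement $L^2(U)$, this gives $\langle \II_U, \calW^m \II_{U^c} \rangle = 0$ for all $m \ge 1$.

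To conclude, I note that $W^{\circ m}$ is itself a graphon: it is symmetric and measurable, and $0 \le W^{\circ m} \le 1$ because each composition integral of factors bounded by $1$ over a probability space is again bounded by $1$; moreover its adjacency operator is exactly $\calW^m$, since $W^{\circ m}$ is by definition the integral kernel of $\calW^m$. Therefore the identity $\langle \II_U, \calW^m \II_{U^c} \rangle = 0$ with $\mu(U) \in (0,1)$ says precisely that the same set $U$ witnesses the failure of the connectedness condition for $W^{\circ m}$, i.e. $W^{\circ m}$ is not connected.

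I do not expect a genuine obstacle here; the proof is short. The only points needing care are the measure-theoretic bookkeeping — invoking Tonelli to convert the zero double integral into an "a.e. section vanishes" statement, and checking that the "almost every $x \in U$" qualifiers compose correctly through the induction on $m$ — and the small remark that $W^{\circ m}$ genuinely lands in $\WW$ so that Definition \ref{def:connected-graphon} applies to it.
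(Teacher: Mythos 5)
Your proof is correct and follows essentially the same line as the paper's: both iterate the observation that a disconnecting set $U$ yields an invariant structure for the adjacency operator, so $\calW^m \II_{U^c}$ stays supported in $U^c$ and remains orthogonal to $\II_U$. Your invariant-subspace phrasing is arguably cleaner than the paper's direct support-tracking induction (which in particular uses a slightly confusing "$\overline{S}$" that should really just be $S$ up to a null set).
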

\begin{proof}
    Let $\calW$ be the adjacency operator of $W$ and $S \in \frakM$ such that $\mu(S) \in (0,1)$ and $\langle \II_{S^c}, \calW \II_S \rangle = 0$. We use this as the base case ($m=1$) to proceed by induction.

    Let $m \ge 1$ be such that 
    \begin{equation} \label{eq:induction}
    \langle \II_{S^c}, \calW^m \II_S \rangle = 0.
    \end{equation}
    Observe that $\II_{S^c}$ and $\calW^m \II_S$ are non-negative functions, so Equation \eqref{eq:induction} implies that $\calW^m \II_S$ is zero almost everywhere on $S^c$, hence $T = \supp (\calW^m \II_S)$ is such that $ \mu(T \setminus S)=0$. Furthermore, $\calW^m \II_S (x) \le 1, \forall x \in [0,1]$, so $(\calW^m \II_S (x)) \le \II_T (x)$. Then,
    \begin{align*}
        \langle \II_{S^c}, \calW^{m+1} \II_S \rangle &\le \langle \II_{S^c}, \calW \II_T \rangle \le \langle \II_{S^c}, \calW \II_{S} \rangle =0. 
    \end{align*}
\end{proof}

\begin{lemma} \label{lemma:neighborhood}
    Let $f \in L^2[0,1]$ be a non-negative function and $W$ a graphon. Then $\calW f$ is a non-negative function and
    $$ \supp (\calW f) = \supp (\calW \II_{\supp(f)}). $$
\end{lemma}
\begin{proof}
    The non-negativity of $\calW f$ comes from the non-negativity of both $W$ and $f$. If $f=0$, the result is immediate, so suppose that $f\neq 0$, and let $S = \supp (f)$ and $x\in [0,1]$. Suppose that $(\calW f)(x) = 0$, then developing the integral and using the definition of $S$, we have
    $$ \int_S W(x,y) f(y) dy = 0. $$
    Since $W$ and $f$ are non-negative, and $f$ is non-zero on $S$, this implies that $W(x,\cdot) = 0$ almost everywhere on $S$, hence $(\calW \II_U)(x) = 0$. The same argument can be run backwards, so that
    $$ (\calW f)(x) = 0 \Leftrightarrow (\calW \II_S)(x) = 0, $$
    so $\supp (\calW f) = \supp (\calW \II_S)$.
\end{proof}

In similar fashion to the proof of Lemma 5.1 of \cite{jansonConnectednessGraphLimits2008}, we define the following "neighborhood operator".

\begin{definition}[$W$-neighbourhood]
    Let $W$ be a graphon and $U \in \frakM$. The $W$-neighbourhood of $U$ is defined as
    $$ \mathcal{N}_W (U) = \supp (\calW \II_U). $$ 
\end{definition}
The map $\mathcal{N}_W$ sends measurable sets to measurable sets, and we denote applying it $m$ times by $\mathcal{N}_W^m$.

\begin{lemma} \label{lemma:neighborhood-powers}
    For all $m \ge 1$,
    $$ \mathcal{N}_W^m (U) = \supp (\calW^m \II_U) $$
\end{lemma}
\begin{proof}
    The case $m=1$ is true by definition. Given $m\ge 1$ such that 
    $$ \mathcal{N}_W^m (U) = \supp (\calW^m \II_U), $$
    we have directly by applying Lemma \ref{lemma:neighborhood} that
    \begin{align*}
        \supp(\calW^{m+1} f) &= \supp (\calW (\calW^m \II_U)) = \supp (\calW \II_{\mathcal{N}_W^m (U)}) = \mathcal{N}_W^{m+1} (U).
    \end{align*}
\end{proof}

We now show that being finitely connected is equivalent to being connected.

\begin{theorem}
    Let $W$ be a graphon and $\calL$ its Laplacian operator. Then the following are equivalent.
    \begin{enumerate}[label=(\alph*)]
        \item $W$ is connected,
        \item $W$ is finitely connected,
        \item $\dim(\ker(\calL)) = 1$.
    \end{enumerate}
\end{theorem}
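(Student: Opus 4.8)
The plan is to route all three equivalences through the graphon Dirichlet identity, supplemented by a measure-theoretic reachability argument for the one genuinely new direction. The starting point: since $W$ is symmetric and $k(x)=\int_0^1 W(x,y)\,dy$, and since $\calL$ is bounded (hence defined on all of $L^2[0,1]$), for every $f\in L^2[0,1]$ one has
$$\langle f,\calL f\rangle=\tfrac12\int_{[0,1]^2}W(x,y)\big(f(x)-f(y)\big)^2\,dx\,dy\ \ge\ 0,$$
so $\calL$ is positive semidefinite and $f\in\ker\calL$ iff $f(x)=f(y)$ for almost every $(x,y)$ with $W(x,y)>0$. Constants always lie in $\ker\calL$, so $\dim\ker\calL\ge1$ automatically, and (c) is equivalent to the assertion that $\ker\calL$ contains only constants.

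For (a)$\Leftrightarrow$(c): if $W$ is not connected, pick $S\in\frakM$ with $\mu(S)\in(0,1)$ and $\langle\II_S,\calW\II_{S^c}\rangle=0$; the Dirichlet identity gives $\langle\II_S,\calL\II_S\rangle=\langle\II_S,\calW\II_{S^c}\rangle=0$, hence (by positivity) $\II_S\in\ker\calL$, and being non-constant it witnesses $\dim\ker\calL\ge2$ — this is (c)$\Rightarrow$(a) by contraposition. Conversely, if $W$ is connected and some $f\in\ker\calL$ were not a.e.\ constant, then some super-level set $S=\{f>c\}$ has $\mu(S)\in(0,1)$; since $f(x)=f(y)$ for a.e.\ $(x,y)$ with $W(x,y)>0$, we get $W=0$ a.e.\ on $S\times S^c$, i.e.\ $\langle\II_S,\calW\II_{S^c}\rangle=0$, contradicting connectedness. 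Hence (a)$\Rightarrow$(c).

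The substantive step is (a)$\Rightarrow$(b). Fix $V\in\frakM$ and set $S_\infty=\bigcup_{m\ge0}\supp(\calW^m\II_V)$ (with $\calW^0=\mathrm{Id}$, so $S_\infty\supseteq V$). I would first show $S_\infty$ is essentially $\calW$-invariant: for each $m$ and a.e.\ $x\notin\supp(\calW^{m+1}\II_V)$, the identity $0=(\calW^{m+1}\II_V)(x)=\int_0^1 W(x,z)(\calW^m\II_V)(z)\,dz$ forces $W(x,\cdot)=0$ a.e.\ on $\supp(\calW^m\II_V)$, since the integrand is non-negative and $\calW^m\II_V>0$ there; taking the union over $m$ of the corresponding exceptional null sets shows $W(x,\cdot)=0$ a.e.\ on $S_\infty$ for a.e.\ $x\in S_\infty^c$, whence $\langle\II_{S_\infty^c},\calW\II_{S_\infty}\rangle=0$. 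Because $\mu(S_\infty)\ge\mu(V)>0$, connectedness forces $\mu(S_\infty)=1$. Then for any $U\in\frakM$, $\mu(U\cap S_\infty)=\mu(U)>0$, and since $\bigcup_{j\le m}\supp(\calW^j\II_V)\uparrow S_\infty$ there is some $m$ with $\mu(U\cap\supp(\calW^m\II_V))>0$, i.e.\ $\langle\II_U,\calW^m\II_V\rangle=\int_U(\calW^m\II_V)>0$ (non-negative integrand); a minor symmetrisation argument (using $\calW=\calW^*$), or the identity $\langle\II_U,\calW^2\II_U\rangle=\|\calW\II_U\|_2^2>0$ for the case $U=V$, upgrades this to $m\ge1$ if one requires it. So $W$ is finitely connected.

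Finally (b)$\Rightarrow$(a): if $W$ is not connected, take $S$ as above; the induction in the proof of Lemma~\ref{lemma:not-connected}, carried out with this single fixed $S$, gives $\langle\II_S,\calW^m\II_{S^c}\rangle=0$ for all $m\ge1$ (and trivially for $m=0$), so $U=S$, $V=S^c$ witness the failure of finite connectedness. Combining the four implications yields (a)$\Leftrightarrow$(b)$\Leftrightarrow$(c). I expect the main obstacle to be the essential $\calW$-invariance of $S_\infty$ in the (a)$\Rightarrow$(b) step: the argument is short but requires care in tracking that the exceptional null sets depend on $m$ and that one may pass to a countable union; all the remaining implications are immediate once the Dirichlet identity is in hand.
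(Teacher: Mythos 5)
Your proof is correct and follows the same three-pronged strategy as the paper (Dirichlet identity for (a)$\Leftrightarrow$(c), Lemma~\ref{lemma:not-connected} for (b)$\Rightarrow$(a), and a maximal-reachable-set argument for (a)$\Rightarrow$(b)). The one place where you genuinely diverge — and improve on the exposition — is (a)$\Rightarrow$(b). The paper fixes two bad sets $U,V$, forms \emph{two} reachable sets $U^\infty$ and $V^\infty$, asserts "by construction" that $\langle\calW^{m_1}\II_{U^\infty},\calW^{m_2}\II_{V^\infty}\rangle=0$, and then splits into cases according to whether $U^\infty\cup V^\infty=[0,1]$; the $\calW$-invariance of the reachable sets, which is precisely what that assertion needs, is left implicit. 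You instead work in the contrapositive-free direction: fix a single $V$, form one reachable set $S_\infty$, prove essential $\calW$-invariance of $S_\infty$ explicitly (tracking the countable union of exceptional null sets, as you flag), deduce $\mu(S_\infty)=1$ from connectedness, and then extract the required $m$ for any $U$ by continuity of measure along $\bigcup_{j\le m}\supp(\calW^j\II_V)\uparrow S_\infty$. This is cleaner: no case split, one reachable set instead of two, and the invariance lemma is stated and proved rather than buried in "by construction." Two small remarks: your worry about upgrading to $m\ge1$ is unnecessary, since the paper's convention (cf.\ the definition of $\delta_W$ and the remark that $\mu(U\cap V)\neq0$ forces $\delta_W=0$) has $0\in\N$, so $m=0$ is a legitimate witness; and your use of the identity $\langle f,\calL f\rangle=\tfrac12\int W(f(x)-f(y))^2$ together with $\calL\ge0$ to pass from $\langle\II_S,\calL\II_S\rangle=0$ to $\II_S\in\ker\calL$ is a standard (and slightly slicker) substitute for the paper's direct computation of $\calL\II_S=0$.
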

\begin{proof}
    We start by proving that $(a) \Leftrightarrow (b)$.
    \begin{itemize}
        \item $(b) \Rightarrow (a)$ If $W$ is not connected, lemma \ref{lemma:not-connected} implies that $W$ cannot be finitely connected.
        \item $(a) \Rightarrow (b)$ Suppose that $W$ is not finitely connected, and let $U,V \in \frakM$ such that $\langle \II_U, \calW^m \II_V \rangle = 0$ for all $m \in \N$. Since $\calW$ is self-adjoint, this is equivalent to 
        $$\langle \calW^{m} \II_U, \calW^{k} \II_V \rangle = 0,$$ 
        for all $m,k\in\N$. 
        
        We define sequences of sets $(U_n)_{n\in\N}$, $(V_n)_{n\in\N}$ as $U_n = \mathcal{N}_W^n (U)$ (respectively for $V_n$), and consider the "maximal reachable sets" $U_\infty = \bigcup_{n\in \N} U_n$ and $V_\infty = \bigcup_{n\in\N} V_n$. 
        
        Observe that by Lemma \ref{lemma:neighborhood}, we have, for all $m, k \in \N$,
        $$ \langle \II_{U_m}, \II_{V_k}\rangle= 0. $$
        We wish to show two things. First, that $U_\infty$ and $V_\infty$ are closed under taking neighborhoods, i.e.
        $$ \mathcal{N}_W(U_\infty) \subset U_\infty, $$
        (resp. for $V_\infty$), and second, that they are disjoint up to a null set, i.e.
        $$ \langle \II_{U_\infty}, \II_{V_\infty} \rangle = 0. $$

        For the first statement, we may directly apply Lemma 5.1 (iv) from \cite{jansonConnectednessGraphLimits2008}, which states that the neighborhood operator commutes with taking unions.
        \begin{align*}
            \mathcal{N}_W (\cup_{m\in \N} U_m) &= \bigcup_{m\in \N} \mathcal{N}_W (U_m) = \bigcup_{m\in \N} U_{m+1}\subset U_\infty.
        \end{align*}
        The same argument yields $\mathcal{N}_W (V_\infty) \subset V_\infty$.

        For the second statement, we first "orthogonalize" the sets $U_m$ (resp. $V_m$), by defining $\overline{U}_0 = U_0$, $\overline{U}_{m+1} = U_{m+1} \setminus (\cup_{k=0}^m U_k), \forall m \in \N$ (the construction of the sets $\overline{V}_m$ is identical).

        The sets $\overline{U}_m$ (resp. $\overline{V}_m$) are mutually disjoint by construction, and since $\overline{U}_m \subset U_m$ ($\overline{V}_m \subset V_m$) for all $m\in \N$, we have for all $m, k \in \N$,
        \begin{align*}
        \langle \II_{\overline{U}_m}, \II_{\overline{V}_k} \rangle &\le \langle \II_{U_m}, \II_{V_k} \rangle = 0.
        \end{align*}

        Moreover, for any $f \in L^2 [0,1]$, we have
        \begin{align*}
            \langle f, \II_{U_\infty} \rangle &= \sum_{m=0} \langle f, \II_{\overline{U}_m} \rangle,
        \end{align*}
        so $\II_{U_\infty} = \sum_{m=0}^\infty \II_{\overline{U}_m}$ and $\II_{V_\infty} = \sum_{m=0}^\infty \II_{\overline{V}_m}$, in the sense that both those series converge in the $L^2$ norm.

        Finally, we may write
        \begin{align*}
            \langle \II_{U_\infty}, \II_{V_\infty} \rangle &= \sum_{k,m \in \N} \langle \II_{\overline{U}_m}, \II_{\overline{V}_k} \rangle = 0.
        \end{align*}
        And because $U_\infty$ and $V_\infty$ are closed under taking neighborhoods, we have
        $$ \langle \II_{U^c_\infty}, \calW \II_{U_\infty} \rangle = 0, $$
        so taking $U_\infty$ suffices to show that $W$ is not connected, as $V \subset V_\infty \subset U_\infty^c$, so $\mu(U_\infty^c) > 0$.
        \item $(a)\Rightarrow (c)$ We adapt the proof from \cite{Petit2019RandomWO}. Observe that $\II_{[0,1]}$ is always in the kernel of $\calL$, so without loss of generality, we may search for $f \in \ker(\calL)$ such that $\langle f,\II_{[0,1]} \rangle = \int_0^1 f(y) dy = 0$. The following sets therefore both have positive measure
        \begin{align*}
            S &= \{x \in [0,1]\,|\, f(x) \ge 0\},\\
            S^c &= \{x \in [0,1]\,|\, f(x) < 0\}.
        \end{align*}
        If $f \in \ker (\calL)$, i.e. $\calL f = 0$, we must have
        $$ \int_{S\times S^c} W^2(x,y) (f(x)-f(y))^2 dx dy \le \int_{[0,1]^2} W^2(x,y) (f(x)-f(y))^2 dx dy = 0,
        $$
        so the first integral must be zero. Since $W$ is connected, there exists a positive measured subset $U \times V \subset S \times S^c$ on which $W>0$. We must therefore have $f(x)-f(y)=0$ for almost every $(x,y) \in U\times V$, but this is a contradiction since, by construction, $f(x) \ge 0 > f(y)$.
        \item $(c)\Rightarrow (a)$ Suppose that $W$ is not connected and let $S \in \frakM$ be such that $\mu(S) \in (0,1)$ and $\langle \II_{S^c}, \calW \II_S \rangle = 0$. Then for almost all $x \in S$, we have $k(x) = \int_{S} W(x,y) dy$, and it is easy to check that $\II_S$ satisfies $$ \calL \II_S = T_k \II_S - \calW \II_S = 0. $$ The same is true for $\II_{S^c}$ and so $\dim(\ker(\calL)) > 1$.
    \end{itemize}
\end{proof}

Let us remark that definitions \ref{def:connected-graphon} and \ref{def:finitely-connected-graphon} and the proof of their equivalence could be extended to arbitrary self-adjoint operators on $L^2 [0,1]$. This is therefore a result that could be extended to other types of graph limits \cite{backhauszActionConvergenceOperators2022}.

The next two propositions show the relation between graph connectedness and graphon connectedness. Connected graphs correspond to connected step graphons and vice versa.

\begin{proposition}
    Let $A \in \AA_n$ be the adjacency matrix of a connected graph $G$. Then the graphon $W = \lift_n (A)$ is finitely connected.
\end{proposition}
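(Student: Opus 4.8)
The plan is to exploit that $W=\lift_n(A)$ is a step graphon on the uniform partition $\{I^{(n)}_1,\dots,I^{(n)}_n\}$, so that the action of $\calW$ on indicator functions reduces to non‑negative matrix algebra with $A$. The key preliminary computation is that $\calW$ maps the (finite–dimensional) space of step functions adapted to $I^{(n)}$ into itself: for a step function $g=\sum_k c_k\II_{I^{(n)}_k}$ and $x\in I^{(n)}_i$ one has $(\calW g)(x)=\sum_j A_{ij}c_j\,\mu(I^{(n)}_j)=\tfrac1n(Ac)_i$, i.e.\ $\calW$ acts as $\tfrac1n A$ on coordinate vectors. Moreover, although $\II_V$ itself need not be a step function, $\calW\II_V$ \emph{is}: for $x\in I^{(n)}_i$, $(\calW\II_V)(x)=\sum_j A_{ij}\,\mu(V\cap I^{(n)}_j)=(Av)_i$, where $v_j=\mu(V\cap I^{(n)}_j)$. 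Iterating, $\calW^m\II_V$ is the step function with coordinate vector $\tfrac{1}{n^{m-1}}A^m v$ for every $m\ge 1$.

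From this, for arbitrary $U,V\in\frakM$ I would write, with $u_i=\mu(U\cap I^{(n)}_i)$ and $v_j=\mu(V\cap I^{(n)}_j)$,
$$ \langle \II_U,\calW^m\II_V\rangle=\int_U(\calW^m\II_V)(x)\,dx=\frac{1}{n^{m-1}}\sum_{i,j=1}^n (A^m)_{ij}\,u_i\,v_j . $$
The point of this reformulation is that $u=(u_i)$ and $v=(v_j)$ are non‑negative vectors with $\sum_i u_i=\mu(U)>0$ and $\sum_j v_j=\mu(V)>0$, so each has at least one strictly positive coordinate, and that $(A^m)_{ij}\ge 0$ for all $i,j,m$.

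It then remains to exhibit, for each such pair, an exponent $m$ with $\sum_{i,j}(A^m)_{ij}u_iv_j>0$. Fix $i$ with $u_i>0$ and $j$ with $v_j>0$. If $i\neq j$, connectedness of $G$ gives a walk of length $\ell=\operatorname{dist}_G(i,j)$ from $i$ to $j$, hence $(A^\ell)_{ij}>0$; if $i=j$ (and $G$ has at least two vertices, the one–vertex case being degenerate), then $i$ has a neighbour so $(A^2)_{ii}=\deg(i)>0$. Taking $m$ to be this length, every term in the sum is non‑negative, so $\langle\II_U,\calW^m\II_V\rangle\ge \tfrac{1}{n^{m-1}}(A^m)_{ij}u_iv_j>0$, which is exactly finite connectedness. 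Since one may always take $m\le\max(\operatorname{diam}(G),2)$, the graphon is in fact uniformly finitely connected.

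The only real subtlety — the ``main obstacle,'' such as it is — is the first step: getting the $1/n^{m-1}$ normalisation right and, crucially, noticing that $\calW$ sends the non‑step function $\II_V$ into the span of step functions after a single application, which is what collapses the whole statement to an easy fact about powers of a non‑negative matrix; everything after that is routine.
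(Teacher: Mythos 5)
Your proof is correct and follows essentially the same route as the paper's: both hinge on the fact that $\calW^m$ acts on step data as $\tfrac{1}{n^{m-1}}A^m$ (which you re-derive inline where the paper cites its Corollary~\ref{coro:powers-step-homogeneous}), and then pick $I^{(n)}_i$, $I^{(n)}_j$ with $\mu(U\cap I^{(n)}_i),\mu(V\cap I^{(n)}_j)>0$ and invoke connectedness of $G$ to get a power of $A$ with a positive $(i,j)$ entry. Your explicit handling of the $i=j$ case via $(A^2)_{ii}=\deg(i)>0$ and the observation about uniform finite connectedness with $m\le\max(\operatorname{diam}(G),2)$ are minor refinements the paper leaves implicit.
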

\begin{proof}
    Since $G$ is connected, for any vertices $i,j \in V(G)$, there exists some integer $m = m(i,j)$ such that $(A^m)_{ij} > 0$. For the graphon $W = \lift_n (A)$, by using Corollary B.1 (proved in the supplementary materials), we have that $W^{\circ m} = \lift_n (\frac{1}{n^{m-1}} A^m)$ is non-zero on $I^{(n)}_i \times I^{(n)}_j$ where $I^{(n)}_i, I^{(n)}_j \in I^{(n)}$ are the partition sets associated to vertices $i$ and $j$.

    Now, for arbitrary $U, V \in \frakM$, there must be some $I^{(n)}_i, I^{(n)}_j \in I^{(n)}$ such that $\mu (U \cap I^{(n)}_i) \neq 0$ and $\mu(V \cap I^{(n)}_j) \neq 0$. Then
    \begin{align*}
        \langle \II_V, \calW^m \II_U \rangle &\ge
        \langle \II_{V \cap I^{(n)}_j}, \calW^m \II_{U \cap I^{(n)}_j} \rangle > 0.
    \end{align*}
\end{proof}

\begin{proposition} \label{prop:graphon-coarsen-connected}
    Let $W$ be a connected graphon and $P\in \calP_n$ a measurable partition. Then $W_P = \coarsen_P (W)$ is connected.
\end{proposition}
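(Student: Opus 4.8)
The plan is to argue by contradiction, unwinding the definition of connectedness for the step graphon $W_P$ and pushing the obstruction back to $W$ itself. Recall that $W_P = \step_P(w)$ with $w = \mat_P(W)$, so $W_P = \sum_{i,j} w_{ij}\II_{P_i\times P_j}$ where $w_{ij} = \frac{1}{\mu(P_i)\mu(P_j)}\int_{P_i\times P_j}W$; since $W\ge 0$ and every $\mu(P_i)>0$, each $w_{ij}\ge 0$, and $\int_{P_i\times P_j}W = w_{ij}\,\mu(P_i)\mu(P_j)$, so $w_{ij}=0$ is equivalent to $W=0$ a.e.\ on $P_i\times P_j$.

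Suppose $W_P$ is not connected: there is a measurable $S$ with $\mu(S)\in(0,1)$ and $\int_{S\times S^c}W_P(x,y)\,dx\,dy = 0$. Expanding the step function gives
$$ 0 = \int_{S\times S^c}W_P(x,y)\,dx\,dy = \sum_{i,j=1}^n w_{ij}\,\mu(S\cap P_i)\,\mu(S^c\cap P_j), $$
a sum of nonnegative terms, hence every term vanishes. Set $A = \{i : \mu(S\cap P_i)>0\}$ and $B = \{j : \mu(S^c\cap P_j)>0\}$. Both are nonempty since $\mu(S),\mu(S^c)>0$, and $A\cup B = \{1,\dots,n\}$ because each $P_i$ has positive measure and $\mu(P_i) = \mu(S\cap P_i)+\mu(S^c\cap P_i)$. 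Vanishing of the terms forces $w_{ij}=0$ for all $i\in A$, $j\in B$.

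I would then split into cases. If $A\neq\{1,\dots,n\}$, put $\hat S = \bigcup_{i\in A}P_i$; then $\mu(\hat S)\in(0,1)$, and since $\{1,\dots,n\}\setminus A\subseteq B$ we get $\int_{\hat S\times\hat S^c}W(x,y)\,dx\,dy = \sum_{i\in A,\,j\notin A} w_{ij}\,\mu(P_i)\mu(P_j) = 0$, contradicting the connectedness of $W$. The case $B\neq\{1,\dots,n\}$ is symmetric, using $\bigcup_{j\in B}P_j$. Finally, if $A=B=\{1,\dots,n\}$, then $w_{ij}=0$ for every pair $(i,j)$, so $\int_{[0,1]^2}W = 0$ and a fortiori $\int_{U\times U^c}W = 0$ for every measurable $U$, again contradicting connectedness of $W$. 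These cases are exhaustive, so $W_P$ is connected.

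The only mildly delicate point is the third case, in which $S$ meets every partition class in positive measure on both sides; the resolution is the observation that then the averaging map $\mat_P$ annihilates $W$ entirely, so degeneracy of $W_P$ could only come from degeneracy of $W$. Everything else is a routine manipulation of the definition, and no deeper machinery (such as the $\ker\calL$ characterization) seems to be needed.
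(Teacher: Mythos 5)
Your proof is correct. It takes a genuinely different route from the paper's: you argue by contradiction, extract the index sets $A = \{i : \mu(S\cap P_i) > 0\}$ and $B = \{j : \mu(S^c\cap P_j) > 0\}$, observe $A\cup B = \{1,\dots,n\}$, and then build a \emph{partition-aligned} cut $\hat S = \bigcup_{i\in A} P_i$ (or its mirror) across which $W$ itself has zero flux, using the basic identity $\int_{P_i\times P_j} W_P = \int_{P_i\times P_j} W$. The paper instead argues directly: given an arbitrary $S$, it invokes connectedness of $W$ to produce a pair $P_i, P_j$ meeting $S$ and $S^c$ in positive measure with $\int_{P_i\times P_j} W > 0$, and then lower-bounds $\langle \II_S, \calW_P \II_{S^c}\rangle$ by $\langle \II_{U\cap P_i}, \calW_P \II_{V\cap P_j}\rangle$. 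Both hinge on the same fact — that $\coarsen_P$ preserves cut-integrals across rectangles built from partition classes — but your contrapositive formulation isolates the structural reason for the result (any disconnecting set for $W_P$ collapses to a disconnecting union of blocks for $W$), which makes the reduction fully explicit and avoids the paper's step of locating a rectangle $U\times V \subset S\times S^c$ on which $W$ is positive almost everywhere. One small remark: your third case ($A = B = \{1,\dots,n\}$) is the least delicate, not the most — it forces $W = 0$ a.e., which is ruled out immediately by connectedness; the content of the argument lives entirely in the first two cases, and specifically in the inclusion $\{1,\dots,n\}\setminus A \subseteq B$.
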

\begin{proof}
    Let $S \in \frakM$ such that $S^c \in \frakM$. Since $W$ is connected, there must exist a measurable subset $U \times V \subset S \times S^c$ such that $W \neq 0$ almost everywhere on $U \times V$, and furthermore there must exist some $P_i, P_j \in P$ such that $\mu(U \cap P_i) \neq 0$ and $\mu(V \cap P_j) \neq 0$. Let $\calW_P = \adj (W_P)$. Observe that 
    \begin{align*}
        \int_{P_i\times P_j} W_P (s,t) ds dt &= \int_{P_i \times P_j} \left[\frac{1}{\mu(P_1)\mu(P_2)}\int_{P_i\times P_j} W(x,y) dx dy \right] ds dt\\
        &= \int_{P_i\times P_j} W(x,y) dx dy \ge \int_{U \cap P_i \times V \cap P_j} W(x,y) dx dy > 0,
    \end{align*}
    so $W_P$ is nonzero on $P_i \times P_j$, and we then have
    \begin{align*}
        \langle \II_S, \calW_P \II_{S^c} \rangle &\ge \langle \II_U , \calW_P \II_V \rangle \ge \langle \II_{U \cap P_i}, \calW_P \II_{V \cap P_j} \rangle > 0,
    \end{align*}
    so $W_P$ is connected.
\end{proof}

\begin{example}
    The Erd\"os-Rényi graphon is trivially connected and has diameter equal to one.
\end{example}

\begin{example}
    The bipartite graphon is obtained from a connected graph. It is therefore connected and has diameter equal to two. Note that this is larger than the diameter of the graph with two vertices connected by an edge.
\end{example}

\begin{example}
    The circular band graphon defined in example \ref{ex:circular-band} satisfies $W^{\circ 2}(x,y) = \frac{1}{2}-\Delta(x,y)$, which is positive almost everywhere, so that $\langle \II_U, \calW^2 \II_V \rangle >0$ for all $U,V\in\frakM$, so $W$ is connected with diameter equal to two.
\end{example}

\begin{example}
    Consider the infinite path graph with vertices indexed by $\N$ and edge set $\{\{n,n+1\} \,|\, \forall n\in \N\}$. We can construct a graphon from this graph (or any countable graph) by associating vertex $n$ with the interval $[1-\frac{1}{2^n},1-\frac{1}{2^{n+1}})$. This is a piecewise constant graphon (but not a step graphon, because there are infinitely many steps) that is connected, but not uniformly finitely connected.
\end{example}

\subsection{Heat kernels on graphons} \label{sec:graphon-heat}

Let $W \in \WW$ be a graphon. The heat equation on $W$ is given by the following abstract differential equation \cite{Petit2019RandomWO},
$$ \dot{u} = - \mathcal{L} u, $$
with initial condition $u_0 \in L^2 [0,1]$. Because $\mathcal{L}$ is a bounded linear operator, a classical solution exists \cite{CurtainZwart2020}, and is explicitly obtained by using the exponential operator,
$$ u(t) = e^{-\mathcal{L}t} u_0, $$
with
$$ e^{-\mathcal{L}t} = \sum_{k=0}^\infty \frac{(-t)^k}{k!} \mathcal{L}^k. $$

This is a positive definite self-adjoint operator on $L^2 [0,1]$, and we may use it to write the "heat kernel" for measurable subsets of $[0,1]$. Given $U,V \in \mathfrak{M}_0$ and $t\ge 0$, we define
$$ q_t (U,V) = \langle \II_V, e^{-\mathcal{L}t} \II_U \rangle. $$
In light of the discussion in Section \ref{sec:other-odes}, we will substitute the Laplacian operator by the adjacency operator in order to avoid unnecessary computations. Thus,
$$ p_t(U,V) = \langle \II_V, e^{\mathcal{W}t} \II_U \rangle. $$

From there, we use the Varadhan formula from Equation \eqref{eq:graph-varadhan} to define a symmetric premetric on measurable subsets. Given $U,V \in \frakM$,
$$ \delta_W (U,V) = \lim_{t\rightarrow 0^+} \frac{\log p_t (U,V)}{\log t}. $$
The adjacency operator $\calW$ is not necessarily positive semidefinite. Nonetheless, the conclusion of Theorem \ref{thm:keller} still holds, because $W$ is non-negative, and we only consider inner products with non-negative functions $\II_U, \II_V$, so $p_t (U,V)$ is always non-negative. We therefore have
\begin{equation} \label{eq:delta_W-inf} 
    \delta_W (U,V) = \inf \{m \in \N \,|\, \langle \II_U, \calW^m \II_V \rangle > 0\}.
\end{equation}

The function $\delta_W$ does not satisfy most of the axioms of a metric. First, if $\mu(U \cap V) \neq 0$, then $\delta_W (U,V) = 0$, so $\delta_W$ is degenerate. Second, the triangle inequality is not always satisfied, as the following example shows.

Consider the cycle graph $C_6$ of length $6$, with nodes labelled in cyclic order $v_1,\dots,v_6$ and let $W = \step_6 (A)$ where $A$ is the adjacency matrix of $C_6$. We denote $I_1, \dots, I_6$ the partition sets in correspondence with the vertices $v_1,\dots, v_6$. Let $X = I_1$, $Y=I_2 \cup I_3$ and $Z = I_4$. Simple calculations (or a drawing as in Figure \ref{fig:cycle}) show that $\delta_W (X,Y) = \delta_W (Y,Z) = 1$, while $\delta_W (X,Z)=3$. The reason for this fact is that by grouping vertices together, we are skipping intermediate edges, thereby artificially shortening composite distances.

\begin{figure}
    \centering
    \includegraphics[alt={A graphical illustration of a cycle graph with 6 vertices labelled in cyclic order. Two boxes labelled X and Z surround vertices 1 and 4 respectively, while a box labelled Y surrounds vertices 2 and 3.},height=0.15\textheight]{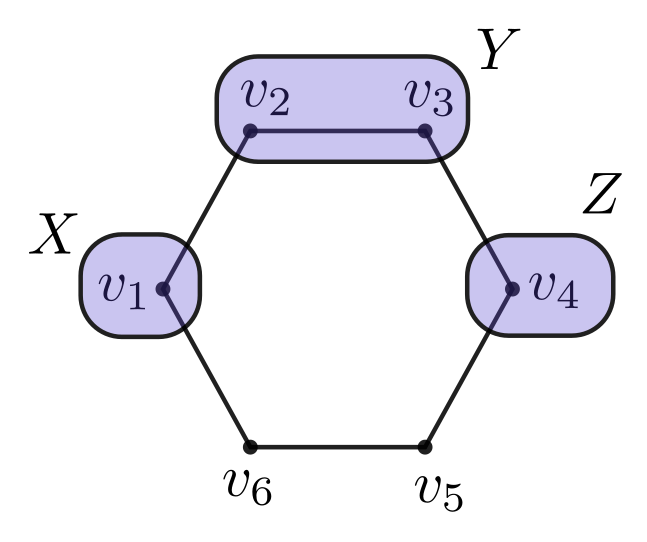}
    \caption{Schematic example of a graph with three sets $X$, $Y$, $Z$ violating the triangle inequality for $\delta_W$.\\
    Alt text: A graphical illustration of a cycle graph with 6 vertices labelled in cyclic order. Two boxes labelled X and Z surround vertices 1 and 4 respectively, while a box labelled Y surrounds vertices 2 and 3.}
    \label{fig:cycle}
\end{figure}

It turns out that all of these problems can be avoided if we coarsen $W$ with a measurable partition $P$ and restrict $\delta_{W_P}$ to the elements of $P$. Before stating the result, we will need the following elementary lemmas, whose proofs can be found in the supplementary materials.

\begin{lemma}[Action of step graphon adjacency operators] \label{lemma:step-graphon-adjacency}
    Let $A\in \AA_n$ and $P \in \calP_n$ and let $W = \step_P (A)$. Then the adjacency operator $\calW = \adj (W)$ acts on $f \in L^2 [0,1]$ as
    $$ \calW f = \sum_{i=1}^n \left( \sum_{j=1}^n A_{ij} \mu_j f_j \right) \II_{P_i}, $$
    where $\mu_i = \mu(P_i)$ and $f_i = \frac{1}{\mu_i} \int_{P_i} f(x) dx$.
\end{lemma}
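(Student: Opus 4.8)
The plan is to compute $\calW f$ directly from the definitions of the adjacency operator and of the step graphon $W = \step_P(A)$; this is essentially a bookkeeping computation, with the only points requiring a moment of care being the disjointness of the partition sets and the integrability of $f$. First I would expand $W$ as the finite sum $W(x,y) = \sum_{i,j=1}^n A_{ij}\,\II_{P_i \times P_j}(x,y) = \sum_{i,j=1}^n A_{ij}\,\II_{P_i}(x)\,\II_{P_j}(y)$, using that $\II_{P_i\times P_j} = \II_{P_i}\otimes\II_{P_j}$ (exactly, not merely almost everywhere, since $\{P_i\}$ is a genuine partition). Substituting into $(\calW f)(x) = \int_0^1 W(x,y) f(y)\,dy$ and interchanging the finite sum with the integral — legitimate because there are finitely many terms and $f \in L^2[0,1] \subset L^1[0,1]$ as $[0,1]$ has finite measure — gives $(\calW f)(x) = \sum_{i,j=1}^n A_{ij}\,\II_{P_i}(x)\int_{P_j} f(y)\,dy$.

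The second step is simply to recognize $\int_{P_j} f(y)\,dy = \mu_j f_j$ straight from the definitions $\mu_j = \mu(P_j)$ and $f_j = \frac{1}{\mu_j}\int_{P_j} f$ (well-defined since $\mu_j > 0$ for a measurable partition), and then to regroup the double sum by the outer index $i$, yielding $\calW f = \sum_{i=1}^n\bigl(\sum_{j=1}^n A_{ij}\mu_j f_j\bigr)\II_{P_i}$, which is exactly the asserted formula.

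There is no genuine obstacle in this lemma; the one remark worth making is that the output is again a $P$-step function, consistent with $\calW$ mapping $L^2[0,1]$ into the span of $\{\II_{P_i}\}_{i=1}^n$, and that symmetry of $A$ is not even needed for this particular identity (it only enters when one wants $\calW$ to be self-adjoint). If desired, one could close with the observation that iterating this formula recovers the matrix powers of the weighted adjacency matrix $[A_{ij}\mu_j]$ acting on the coordinate vector $(f_1,\dots,f_n)$, which is what makes step graphons behave like weighted finite graphs.
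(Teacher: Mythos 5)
Your proof is correct and follows essentially the same approach as the paper's: a direct computation from the definitions of $\step_P(A)$ and $\adj(W)$, partitioning the integral over $[0,1]$ into integrals over the $P_j$ and identifying $\int_{P_j} f = \mu_j f_j$. The only cosmetic difference is that you expand $W$ as a double sum of indicator products up front, whereas the paper fixes $x \in P_i$ and evaluates directly; the substance is identical.
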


\begin{lemma}[Powers of step graphons] \label{lemma:step-graphon-powers}
    Let $A \in \AA_n$, $P \in \calP_n$ and $m \ge 1$. Then
    $$ (\step_P(A))^{\circ m} = \step_P (M^{m-1}A), $$
    where $M_{ij} = A_{ij} \mu_j$.
\end{lemma}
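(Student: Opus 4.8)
The plan is to argue by induction on $m$, using the recursion $W^{\circ(m+1)}(x,y)=\int_0^1 W^{\circ m}(x,z)\,W(z,y)\,dz$, which is just the statement $\calW^{m+1}=\calW^m\calW$ read off at the level of integral kernels. Throughout, write $W=\step_P(A)$, $\mu_i=\mu(P_i)$, and let $D$ be the diagonal matrix with $D_{ii}=\mu_i$, so that $M=AD$. The base case $m=1$ holds trivially because $M^0A=A$ and $\step_P(A)=W$.

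For the inductive step, suppose $W^{\circ m}=\step_P(M^{m-1}A)$, i.e. $W^{\circ m}(x,z)=(M^{m-1}A)_{ij}$ whenever $x\in P_i$ and $z\in P_j$. Fix $x\in P_i$ and $y\in P_k$. Viewed as functions of the integration variable $z$, the integrand factorises as $W^{\circ m}(x,z)=\sum_j (M^{m-1}A)_{ij}\II_{P_j}(z)$ and $W(z,y)=\sum_\ell A_{\ell k}\II_{P_\ell}(z)$; since the $P_\ell$ are pairwise disjoint we have $\II_{P_j}\II_{P_\ell}=\delta_{j\ell}\II_{P_j}$, so the product collapses to the single sum $\sum_j (M^{m-1}A)_{ij}A_{jk}\II_{P_j}(z)$. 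Integrating over $z$ contributes a factor $\mu_j$ to the $j$-th term, which yields
$$W^{\circ(m+1)}(x,y)=\sum_j (M^{m-1}A)_{ij}\,\mu_j\,A_{jk}=\bigl(M^{m-1}ADA\bigr)_{ik}=\bigl(M^{m-1}(AD)A\bigr)_{ik}=(M^mA)_{ik},$$
the last equality being the defining relation $AD=M$. As this holds for all $i,k$, we conclude $W^{\circ(m+1)}=\step_P(M^mA)$, closing the induction.

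There is no real obstacle here beyond index bookkeeping: the one thing to keep straight is that the measure factor in $M_{ij}=A_{ij}\mu_j$ is attached to the second index, so that integrating a product of two step kernels over a middle variable produces exactly one extra power of $M$ rather than a symmetrised combination. For the reader who prefers to avoid induction, the same conclusion follows directly from Lemma~\ref{lemma:step-graphon-adjacency}: that lemma shows $\calW$ maps any $f\in L^2[0,1]$ to the $P$-step function $\sum_i (Av)_i\II_{P_i}$ where $v_i=\int_{P_i}f$, and the vector of integrals of this image over the $P_i$ is $DAv$; iterating gives $\calW^m f=\sum_i\bigl((AD)^{m-1}Av\bigr)_i\II_{P_i}$ for $m\ge1$, which is precisely the action of the integral operator with kernel $\step_P(M^{m-1}A)$, so the two kernels agree almost everywhere.
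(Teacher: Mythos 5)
Your primary argument is correct: the induction via the kernel identity $W^{\circ(m+1)}(x,y)=\int_0^1 W^{\circ m}(x,z)\,W(z,y)\,dz$ goes through cleanly, with the key point being exactly as you note, that the measure factor $\mu_j$ picked up from integrating $\II_{P_j}$ attaches to the column index, so $\sum_j (M^{m-1}A)_{ij}\,\mu_j\,A_{jk}=(M^{m-1}ADA)_{ik}=(M^m A)_{ik}$. The paper's proof instead works at the operator level: it uses Lemma~\ref{lemma:step-graphon-adjacency} to identify the action of $\calW$ on the finite-dimensional space $\mathcal{S}_P$ of $P$-step functions with the matrix $M$ on coefficient vectors, obtains $\calW^m f = \sum_i (M^m[f])_i\II_{P_i}$, and then checks that $\adj(\step_P(M^{m-1}A))$ produces the same output, concluding the kernels agree almost everywhere. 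Your closing paragraph reconstructs exactly this operator-level argument (with the integral vector $v_i=\int_{P_i}f$ in place of the average $f_i$), so you actually supply both proofs. The kernel-level induction is slightly more elementary and self-contained, since it never invokes Lemma~\ref{lemma:step-graphon-adjacency}; the operator version buys you the observation, used elsewhere in the paper, that $\calW$ restricted to $\mathcal{S}_P$ is literally the finite matrix $M$. One small point worth a remark in a careful write-up: for $\step_P(M^{m-1}A)$ to be a bona fide element of the codomain $\WW$ one should note that $M^{m-1}A=(AD)^{m-1}A=A(DA)^{m-1}$ is symmetric and has entries in $[0,1]$, though the paper elides this too.
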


\begin{theorem} \label{thm:delta_W-coarsen}
    Let $W\in\WW$ be a connected graphon, $P \in \calP_n$, a measurable partition, and $W_P = \coarsen_P (W)$. Then $(P,\delta_{W_P})$ is a metric space.
\end{theorem}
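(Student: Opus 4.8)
The plan is to reduce $\delta_{W_P}$ to the ordinary shortest-path distance of a finite graph, where the metric axioms are classical. Write $A = \mat_P(W) \in \AA_n$, so that $W_P = \coarsen_P(W) = \step_P(A)$, and let $G$ be the graph on the vertex set $\{1,\dots,n\}$ with $\{i,j\}$ an edge precisely when $A_{ij} > 0$. The first step is to check that $\delta_{W_P}$ is well defined and finite on $P \times P$: by Proposition~\ref{prop:graphon-coarsen-connected}, $W_P$ is connected, hence finitely connected by the theorem equating connectedness with finite connectedness, so for every $i,j$ there is some $m \in \N$ with $\langle \II_{P_i}, \calW_P^m \II_{P_j}\rangle > 0$, and therefore $\delta_{W_P}(P_i,P_j) < \infty$.

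The core step is an explicit formula for the inner products in \eqref{eq:delta_W-inf}. By Lemma~\ref{lemma:step-graphon-powers}, for $m \ge 1$ one has $W_P^{\circ m} = \step_P(M^{m-1}A)$ with $M_{ij} = A_{ij}\mu_j$, and since the partition sets are disjoint, integrating over $P_i \times P_j$ yields
$$ \langle \II_{P_i}, \calW_P^m \II_{P_j}\rangle = (M^{m-1}A)_{ij}\,\mu_i\mu_j, \qquad \mu_k := \mu(P_k). $$
All the $\mu_k$ are strictly positive and $A$ is non-negative, so $(M^{m-1}A)_{ij}$ is a sum over length-$m$ walks $i \to k_1 \to \dots \to k_{m-1} \to j$ of the non-negative products $A_{ik_1}\mu_{k_1}A_{k_1k_2}\cdots A_{k_{m-1}j}$; hence it is positive exactly when $G$ has a walk of length $m$ from $i$ to $j$. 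For $m = 0$ we have $\langle \II_{P_i}, \II_{P_j}\rangle = \mu(P_i \cap P_j)$, which is positive iff $i = j$. Combining these, $\delta_{W_P}(P_i,P_j)$ equals the length of a shortest walk in $G$ from $i$ to $j$, i.e. the graph shortest-path distance $d_G(i,j)$; finiteness of $\delta_{W_P}$ then means $G$ is connected.

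It remains to recall that $d_G$ is a metric on the vertices of a connected graph, transcribed through the identification above. Non-degeneracy is the $m = 0$ case: $\delta_{W_P}(P_i,P_j) = 0$ iff $i = j$, and since the partition elements are pairwise distinct sets this is exactly what a metric requires. Symmetry follows from self-adjointness of $\calW_P$ (equivalently, $A = A^T$), which gives $\langle \II_{P_i}, \calW_P^m \II_{P_j}\rangle = \langle \II_{P_j}, \calW_P^m \II_{P_i}\rangle$. For the triangle inequality, set $a = d_G(i,j)$ and $b = d_G(j,k)$; concatenating a shortest $i$-$j$ walk with a shortest $j$-$k$ walk gives a length-$(a+b)$ walk from $i$ to $k$, so $(M^{a+b-1}A)_{ik} \ge (M^{a-1}A)_{ij}\,\mu_j\,(M^{b-1}A)_{jk} > 0$, whence $\delta_{W_P}(P_i,P_k) = d_G(i,k) \le a + b$. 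This establishes that $(P,\delta_{W_P})$ is a metric space.

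The one genuinely delicate point — and the only place the hypotheses bite — is the restriction from arbitrary sets in $\frakM$ to elements of a partition. The triangle-inequality failure in the $C_6$ example is precisely the failure of $\II_Y$ to be a scalar multiple of a single ``basis'' indicator, and the step-graphon lemmas are exactly what repairs this. So the main work is the bookkeeping behind the identity $\langle \II_{P_i}, \calW_P^m \II_{P_j}\rangle = (M^{m-1}A)_{ij}\mu_i\mu_j$ and the (routine) walk-positivity equivalence; connectedness of $W$ is used only to force $G$ connected so that $d_G$ is everywhere finite.
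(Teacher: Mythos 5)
Your proof is correct and follows essentially the same route as the paper: use Proposition~\ref{prop:graphon-coarsen-connected} for connectedness of $W_P$, then Lemmas~\ref{lemma:step-graphon-adjacency} and~\ref{lemma:step-graphon-powers} to reduce $\delta_{W_P}$ on $P\times P$ to the shortest-path distance of the weighted connected graph encoded by $\mat_P(W)$. The paper's proof is terse and invokes ``Varadhan's formula for graphs'' wholesale; you unwind the same reduction explicitly, computing $\langle \II_{P_i}, \calW_P^m \II_{P_j}\rangle = (M^{m-1}A)_{ij}\mu_i\mu_j$ and verifying the metric axioms by hand, which is a harmless expansion of the same argument rather than a different method.
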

\begin{proof}
    We know by Proposition \ref{prop:graphon-coarsen-connected} that $W_P$ is connected. Lemmas \ref{lemma:step-graphon-adjacency} and \ref{lemma:step-graphon-powers} imply that we can interpret $A = \mat_P (W)$ as the weighted adjacency matrix of a connected graph, and we can directly apply Varadhan's formula for graphs (or its generalization described in \ref{sec:other-odes}).
\end{proof} 

\begin{corollary} \label{coro:graphon-step-isometry}
    Let $A\in \AA_n$ be the adjacency matrix of a connected finite graph $G$ and $W = \lift_n (A)$, with $I^{(n)} = \{I^{(n)}_1,\dots,I^{(n)}_n\} \in \calP_n$ the corresponding partition of $[0,1]$. Then $(I^{(n)}, \delta_W)$ is isometric to $(V(G),d_G)$, where $V(G)$ is the vertex set of $G$, and $d_G$ is the shortest-path distance on $G$.
\end{corollary}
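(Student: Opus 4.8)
The plan is to reduce the statement to the classical combinatorial identity that the entries of $A^m$ count walks of length $m$ in $G$, so that the smallest $m$ with $(A^m)_{ij}>0$ is precisely $d_G(i,j)$. First I would check that the statement is well-posed: since $W=\lift_n(A)$ is by construction an $I^{(n)}$-step graphon, it coincides with its own coarsening, $\coarsen_{I^{(n)}}(W)=W$, and $\mat_{I^{(n)}}(W)=A$. Because $G$ is connected, $W$ is connected (this is the content of the proposition on lifts of connected graphs), so Theorem~\ref{thm:delta_W-coarsen} applies with $P=I^{(n)}$ and guarantees that $(I^{(n)},\delta_W)$ is genuinely a metric space, making the isometry claim meaningful.

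Next I would compute the composition powers of $W$ explicitly. On the uniform partition every cell has measure $\mu(I^{(n)}_i)=1/n$, so in Lemma~\ref{lemma:step-graphon-powers} (equivalently, Corollary~\ref{coro:powers-step-homogeneous}) the matrix $M_{ij}=A_{ij}\mu_j$ is simply $A/n$, whence $M^{m-1}A=A^m/n^{m-1}$ and
$$ W^{\circ m}=\lift_n\!\left(\frac{1}{n^{m-1}}A^m\right),\qquad m\ge 1. $$
Integrating this kernel over a product of two cells then gives
$$ \langle \II_{I^{(n)}_i},\,\calW^m\,\II_{I^{(n)}_j}\rangle=\int_{I^{(n)}_i\times I^{(n)}_j}W^{\circ m}(x,y)\,dx\,dy=\frac{(A^m)_{ij}}{n^{m+1}}, $$
which is strictly positive if and only if $(A^m)_{ij}>0$.

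Finally I would plug this into the characterisation \eqref{eq:delta_W-inf}, obtaining
$$ \delta_W(I^{(n)}_i,I^{(n)}_j)=\inf\{m\in\N \mid \langle \II_{I^{(n)}_i},\calW^m\II_{I^{(n)}_j}\rangle>0\}=\inf\{m\in\N \mid (A^m)_{ij}>0\}. $$
Since the entries of $A^m$ are non-negative integers counting walks of length $m$, we have $(A^m)_{ij}=0$ for every $m<d_G(i,j)$ (no such walk exists), while any geodesic path from $i$ to $j$ shows $(A^{d_G(i,j)})_{ij}\ge 1$; together with the convention $A^0=I$ covering the case $i=j$, this yields $\delta_W(I^{(n)}_i,I^{(n)}_j)=d_G(i,j)$ for all $i,j$. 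Hence the bijection $i\mapsto I^{(n)}_i$ from $V(G)$ onto the partition $I^{(n)}$ is an isometry, which is the claim.

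The only steps requiring any attention are the bookkeeping of normalisation constants in the power formula and the (standard) walk-counting identity; I do not anticipate a genuine obstacle, the substantive work having already been done by Theorem~\ref{thm:delta_W-coarsen} and Lemma~\ref{lemma:step-graphon-powers}.
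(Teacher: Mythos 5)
Your proof is correct and follows essentially the same path as the paper: the paper states this corollary without a separate proof, intending it as an immediate specialization of Theorem~\ref{thm:delta_W-coarsen}, whose proof already identifies $\delta_{W_P}$ with the graph shortest-path distance for the weighted graph $\mat_P(W)$ via Lemmas~\ref{lemma:step-graphon-adjacency} and~\ref{lemma:step-graphon-powers} and Varadhan's formula for graphs. Your argument makes the same reduction explicit (noting $\mat_{I^{(n)}}(W)=A$ and unwinding $W^{\circ m}$ and the inner products), which is a helpful level of detail but not a different strategy.
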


\begin{proposition}
    Let $W \in \WW$ be connected, $P\in \calP_n$ and $\varphi : [0,1] \rightarrow [0,1]$ a measure preserving bijection. Then $(\varphi(P), \delta_{W_P^\varphi})$ is isometric to $(P,\delta_{W_P})$, where $\varphi(P) = \{\varphi(P_i) \,|\, P_i \in P\}$.
\end{proposition}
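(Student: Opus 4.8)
The plan is to reduce the claim to the fact that, on the blocks of a step graphon, the distance $\delta$ depends only on the zero/nonzero pattern of the weight matrix, so that relabelling the blocks by a measure-preserving bijection changes nothing. Set $A=\mat_P(W)$, so that $W_P=\coarsen_P(W)=\step_P(A)$. For an arbitrary measurable partition $Q\in\calP_n$, write $\calW_Q=\adj(\step_Q(A))$; Lemmas~\ref{lemma:step-graphon-adjacency} and~\ref{lemma:step-graphon-powers} give, for $m\ge 1$, $\langle\II_{Q_i},\calW_Q^{\,m}\II_{Q_j}\rangle=\mu(Q_i)\mu(Q_j)\,(M^{m-1}A)_{ij}$ with $M_{k\ell}=A_{k\ell}\,\mu(Q_\ell)\ge 0$, while for $m=0$ it equals $\mu(Q_i\cap Q_j)$, positive iff $i=j$. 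Hence, by the characterisation~\eqref{eq:delta_W-inf}, $\delta_{\step_Q(A)}$ vanishes exactly on the diagonal and otherwise
$$ \delta_{\step_Q(A)}(Q_i,Q_j)=\inf\{\,m\in\N \,|\, (M^{m-1}A)_{ij}\neq 0\,\}=d_{G_A}(i,j)\qquad(i\neq j),$$
where $G_A$ is the graph on $\{1,\dots,n\}$ with edge set $\{\,\{i,j\}\,|\,A_{ij}\neq 0\,\}$ and the second equality holds because a positive $(M^{m-1}A)_{ij}$ corresponds exactly to a walk of length $m$ in $G_A$ from $i$ to $j$, so the smallest such $m$ is the shortest-path length. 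Thus, whenever $G_A$ is connected, the map $Q_i\mapsto i$ is an isometry from $(Q,\delta_{\step_Q(A)})$ onto $(G_A,d_{G_A})$, \emph{for every} $Q\in\calP_n$.

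I would now apply this on both sides. Since $W$ is connected, $W_P=\coarsen_P(W)$ is connected by Proposition~\ref{prop:graphon-coarsen-connected}, and then (exactly as in the proof of Theorem~\ref{thm:delta_W-coarsen}) $G_A$ is connected, so $(P,\delta_{W_P})$ is isometric to $(G_A,d_{G_A})$. For the transformed graphon, applying the bijection $\varphi$ to $\step_P(A)$ merely relabels the partition blocks and leaves the matrix $A$ untouched, so $W_P^\varphi$ is again a step graphon with weight matrix $A$, over the partition $\varphi(P)=\{\varphi(P_i)\}$; and $\varphi(P)$ is a genuine element of $\calP_n$ precisely because $\varphi$ preserves $\mu$, so its blocks still have strictly positive measure. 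Therefore $(\varphi(P),\delta_{W_P^\varphi})$ is likewise isometric to $(G_A,d_{G_A})$ via the $i$-th-block correspondence, and composing the two isometries shows $P_i\mapsto\varphi(P_i)$ is an isometry from $(P,\delta_{W_P})$ onto $(\varphi(P),\delta_{W_P^\varphi})$, which is the assertion. One could instead argue directly from Corollary~\ref{coro:powers-isomorphism}, via $(W_P^\varphi)^{\circ m}=(W_P^{\circ m})^\varphi$ together with a measure-preserving change of variables in the integrals defining $\delta_{W_P}$ and $\delta_{W_P^\varphi}$, but routing through $G_A$ isolates the single place where measure-preservation of $\varphi$ is actually needed.

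I do not anticipate a genuine obstacle: the proposition just says that the pipeline $W\mapsto W_P\mapsto(P,\delta_{W_P})$ is natural under relabellings of $[0,1]$, and the one load-bearing fact — that $\delta$ on a step graphon's blocks detects only the support pattern of the weight matrix, which $\varphi$ does not disturb — drops straight out of Lemmas~\ref{lemma:step-graphon-adjacency} and~\ref{lemma:step-graphon-powers}. The only points needing a moment's care are that $G_A$ is connected (so that the premetrics are actual metrics and the block-correspondences are isometries, not merely distance non-increasing maps), which is inherited from connectedness of $W$, and that $\varphi(P)$ is still a measurable partition, which is exactly where measure-preservation of $\varphi$ is used.
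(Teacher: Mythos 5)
Your approach is genuinely different from the paper's. The paper argues abstractly: it invokes Proposition~\ref{prop:isomorphic-graphon-unitary} to write $\calW_\varphi = U^\varphi\calW U^{\varphi^{-1}}$, moves the unitaries across the inner product to relate $\langle \II_{\varphi(P_i)},\calW_\varphi^m\II_{\varphi(P_j)}\rangle$ to the corresponding quantity for $W_P$, and then applies equation~\eqref{eq:delta_W-inf}. You instead compute everything explicitly on the step-graphon side via Lemmas~\ref{lemma:step-graphon-adjacency} and~\ref{lemma:step-graphon-powers}, proving the stronger and independently useful fact that for \emph{any} $Q\in\calP_n$, $\delta_{\step_Q(A)}(Q_i,Q_j)=d_{G_A}(i,j)$, so that the metric on the blocks depends only on the support pattern of $A$ and not on the measures of the blocks. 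That reduction is correct, cleanly isolates the role of $\mu(Q_\ell)>0$, and is a legitimate alternative to the paper's unitary-equivalence route.

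However, there is a genuine gap at the load-bearing step where you identify the partition on which $W_P^\varphi$ is a step graphon. Since $W_P^\varphi(x,y)=W_P(\varphi(x),\varphi(y))$, the graphon $W_P^\varphi$ takes the constant value $A_{ij}$ precisely where $\varphi(x)\in P_i$ and $\varphi(y)\in P_j$, i.e.\ on $\varphi^{-1}(P_i)\times\varphi^{-1}(P_j)$. So $W_P^\varphi=\step_{\varphi^{-1}(P)}(A)$, \emph{not} $\step_{\varphi(P)}(A)$ as you assert. With this correction your argument proves that $(\varphi^{-1}(P),\delta_{W_P^\varphi})$ is isometric to $(P,\delta_{W_P})$, which is not literally the stated claim. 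The discrepancy is not cosmetic: the sets $\varphi(P_i)$ are in general \emph{not} aligned with the natural step partition $\varphi^{-1}(P)$ of $W_P^\varphi$, so $\delta_{W_P^\varphi}(\varphi(P_i),\varphi(P_j))$ can detect cross-block adjacencies and strictly decrease some distances. (For example, take $A$ the adjacency matrix of the path $1\mbox{--}2\mbox{--}3$ on the uniform partition of $[0,1]$ into thirds, and $\varphi(x)=x+\tfrac{1}{12}\bmod 1$; then $\delta_{W_P^\varphi}(\varphi(P_1),\varphi(P_3))=1$ while $\delta_{W_P}(P_1,P_3)=2$.) In fairness, the paper's own proof contains the matching slip: the step $U^{\varphi^{-1}}\II_{\varphi(P_i)}=\II_{P_i}$ is incorrect (the left-hand side is $\II_{\varphi^2(P_i)}$), and the proposition as stated should read $\varphi^{-1}(P)$ in place of $\varphi(P)$ --- or equivalently one may relabel $\varphi\mapsto\varphi^{-1}$ throughout, since $\varphi^{-1}$ is likewise an arbitrary measure-preserving bijection. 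Your proof inherits exactly this sign confusion; once $\varphi(P)$ is replaced by $\varphi^{-1}(P)$ in the assertion, your $G_A$ reduction closes the argument.
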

\begin{proof}
    First, notice that $\varphi(P) \in \calP_n$, so $(\varphi(P), \delta_{W_P^\varphi})$ is a metric space by Theorem \ref{thm:delta_W-coarsen}. Let $P_i, P_j \in P$, and $\calW = \adj(W_P), \calW_\varphi = \adj(W_P^\varphi)$. By Proposition \ref{prop:isomorphic-graphon-unitary}, we have
    $$ \calW_\varphi = U^\varphi \calW U^{\varphi^{-1}}, $$
    where $(U^\varphi f)(x) = f(\varphi(x))$, for $f\in L^2 [0,1]$, $x\in [0,1]$ (correspondingly for $U^{\varphi^{-1}}$).

    For $m\in \N$, we have
    \begin{align*}
        \langle \II_{\varphi(P_i)}, \calW_\varphi^m \II_{\varphi(P_j)} \rangle &= \langle \II_{\varphi(P_i)}, U^\varphi \calW^m U^{\varphi^{-1}} \II_{\varphi(P_j)} \rangle\\
        &= \langle U^{\varphi^{-1}} \II_{\varphi(P_i)}, \calW^m (U^{\varphi^{-1}} \II_{\varphi(P_j)}) \rangle = \langle \II_{P_i}, \calW^m \II_{P_j} \rangle.
    \end{align*}
    It is then straightforward from Equation \eqref{eq:delta_W-inf} to conclude that $\delta_{W_P^\varphi} (\varphi(P_i), \varphi(P_j)) = \delta_{W_P} (P_i,P_j)$.
\end{proof}

Another useful property that is satisfied by $\delta_W$ is the following. We will make use of it to prove the main result of the next section.
\begin{lemma} \label{lemma:delta-monotonic}
    If $U \subset U'$, then $\delta_W (U,V) \ge \delta_W (U',V)$, for all $V\in \frakM$.
\end{lemma}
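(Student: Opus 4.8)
The plan is to argue directly from the combinatorial characterization of $\delta_W$ recorded in equation \eqref{eq:delta_W-inf}, namely
$$ \delta_W(U,V) = \inf\{m\in\N \mid \langle \II_U, \calW^m \II_V\rangle > 0\}, $$
and to show that enlarging the first argument can only enlarge the set of admissible exponents, hence only decrease the infimum.

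First I would record that for every $m\in\N$ the function $\calW^m \II_V$ is non-negative almost everywhere. This is immediate by induction on $m$: $\calW^0 \II_V = \II_V \ge 0$, and if $g \ge 0$ then $(\calW g)(x) = \int_0^1 W(x,y)g(y)\,dy \ge 0$ since $W$ is non-negative; equivalently, one may note that the integral kernel $W^{\circ m}$ of $\calW^m$ is non-negative. Next, from $U \subset U'$ we get $\II_U \le \II_{U'}$ pointwise, so pairing both sides against the non-negative function $\calW^m \II_V$ preserves the inequality:
$$ \langle \II_U, \calW^m \II_V\rangle \;\le\; \langle \II_{U'}, \calW^m \II_V\rangle \qquad \text{for every } m\in\N. $$
Consequently, whenever $\langle \II_U, \calW^m \II_V\rangle > 0$ we also have $\langle \II_{U'}, \calW^m \II_V\rangle > 0$, so
$$ \{m\in\N \mid \langle \II_U, \calW^m \II_V\rangle > 0\} \;\subseteq\; \{m\in\N \mid \langle \II_{U'}, \calW^m \II_V\rangle > 0\}, $$
and taking the infimum of each side (the infimum of a subset is no smaller) gives $\delta_W(U',V) \le \delta_W(U,V)$, which is the claim.

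There is essentially no obstacle; the only point that needs to be stated carefully is the non-negativity of $\calW^m \II_V$, since that is precisely what allows the pointwise inequality $\II_U \le \II_{U'}$ to pass through the inner product. The degenerate cases require no separate treatment: if $\delta_W(U,V) = \infty$ the inequality is vacuous, and if $\mu(U'\cap V) > 0$ then $\delta_W(U',V) = 0$ and the bound is trivial.
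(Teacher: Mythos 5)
Your proof is correct and follows essentially the same route as the paper's: both argue that $U \subset U'$ gives $\langle \II_U, \calW^m \II_V\rangle \le \langle \II_{U'}, \calW^m \II_V\rangle$ for every $m$, and then read off the inequality for the infima from equation \eqref{eq:delta_W-inf}. You supply slightly more detail than the paper does — explicitly justifying the non-negativity of $\calW^m \II_V$, which is the step the paper leaves implicit — but this is the same argument.
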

\begin{proof}
    For $m\ge 0$, we have
    $$ \langle \II_{U}, \calW^m \II_V \rangle \le \langle \II_{U'}, \calW^m \II_V \rangle. $$
    So, if $\langle \II_U, \calW^m \II_V \rangle > 0$, then $\langle \II_{U'}, \calW^m \II_V \rangle > 0$, and by Equation \eqref{eq:delta_W-inf}, the desired inequality holds.
\end{proof}

\subsection{The Varadhan distance on graphons} \label{sec:varadhan-pointwise}

For two points $x,y \in [0,1]$, we wish to use the function $\delta_W$ defined in the previous section to obtain a pointwise distance $\overline{\delta}_W (x,y)$. Intuitively, since $\delta_W$ lets us compute a distance between disjoint sets in $\frakM$, one might hope to obtain a distance between $x$ and $y$ by looking at the distance between arbitrarily small neighbourhoods of $x$ and $y$. Formally, we define
\begin{equation} \label{eq:delta_W-pointwise}
\overline{\delta}_W (x,y) = \lim_{n\rightarrow \infty} \delta_W (X_n,Y_n),
\end{equation}
where $X_n$, $Y_n$ are respectively nested sequences of neighbourhoods of $x$ (resp. $y$), such that $\cap_{n\in\N} X_n = \{x\}$ (resp. $\cap_{n\in\N} Y_n = \{y\}$). Without loss of generality, we may assume that $X_n$ and $Y_n$ are intervals containing $x$ and $y$ respectively. It is not obvious that this yields a well-defined distance for all points, and in the coming paragraph, we will introduce an equivalent definition that makes computing distances much simpler.

Let $W$ be a connected graphon with finite diameter. We recursively define sets $(B_n)_{n\in\N}$ with $B_0 = \{(x,x) | x\in [0,1]\}$ (i.e. the "diagonal" in $[0,1]^2$), and for $n\ge 1$,
\begin{equation} \label{eq:B_n} 
    B_n = \overline{\ess} \left(\supp (W^{\circ n}) \right) \setminus \left(\bigcup_{k=0}^{n-1} B_k\right),
\end{equation}
where $\overline{\ess} (S)$ denotes the \textit{essential closure} of the set $S$, defined as the set of points $x \in [0,1]$, such that for any neighbourhood $N_x$ of $x$, $\mu (N_x \cap S) > 0$. This is a subset of the topological closure of $S$.

The pointwise distance function we propose is then defined as 
\begin{equation} \label{eq:d_W}
    d_W (x,y) = \sum_{n=0}^\infty n \II_{B_n} (x,y). 
\end{equation}    
By Equation \eqref{eq:B_n} and the assumption that $W$ is connected, the sets $B_n$ form a partition of the unit square, so $d_W$ is well-defined and $d_W (x,y) = n$ iff $(x,y) \in B_n$. In particular, $d_W (x,y) = 0$ iff $x=y$, so $d_W$ is non-degenerate.

\begin{proposition} \label{thm:varadhan-pointwise}
    For all $x,y\in [0,1]$, 
    $$\overline{\delta}_W (x,y) = d_W (x,y).$$
\end{proposition}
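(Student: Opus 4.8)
The plan is to show the two quantities agree by sandwiching $\overline{\delta}_W(x,y)$ between $d_W(x,y)$ from both sides, using Lemma~\ref{lemma:delta-monotonic} and the characterization~\eqref{eq:delta_W-inf} of $\delta_W$ as the smallest $m$ with $\langle \II_U, \calW^m \II_V\rangle > 0$. Recall that $\langle \II_{X_n}, \calW^m \II_{Y_n}\rangle = \int_{X_n \times Y_n} W^{\circ m}(s,t)\, ds\, dt$, so $\delta_W(X_n, Y_n)$ is the smallest $m$ such that $W^{\circ m}$ has positive measure on $X_n \times Y_n$. Since $X_n, Y_n$ are nested decreasing to $\{x\}, \{y\}$, Lemma~\ref{lemma:delta-monotonic} gives that $n \mapsto \delta_W(X_n, Y_n)$ is non-decreasing, so the limit in~\eqref{eq:delta_W-pointwise} exists in $\N \cup \{\infty\}$; finite diameter bounds it, so it is finite.

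For the inequality $\overline{\delta}_W(x,y) \le d_W(x,y)$: let $N = d_W(x,y)$, so $(x,y) \in B_N \subseteq \supp(W^{\circ N})$. The point is that $(x,y)$ being in the \emph{essential} support of $W^{\circ N}$ means every neighbourhood of $(x,y)$ meets $\supp(W^{\circ N})$ in positive measure; in particular $X_n \times Y_n$ (which for $n$ large is contained in any prescribed neighbourhood of $(x,y)$, shrinking to the point) has $\int_{X_n \times Y_n} W^{\circ N} > 0$. Hence $\delta_W(X_n, Y_n) \le N$ for all $n$, so the limit is $\le N$. For the reverse inequality $\overline{\delta}_W(x,y) \ge d_W(x,y)$: suppose $\overline{\delta}_W(x,y) = M < N = d_W(x,y)$. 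Then $\delta_W(X_n, Y_n) \le M$ for all $n$, i.e.\ $\int_{X_n \times Y_n} W^{\circ M} > 0$ for every $n$. Intersecting, this should force $(x,y)$ into $\supp(W^{\circ M})$ (a point each of whose shrinking neighbourhoods carries positive $W^{\circ M}$-mass lies in the essential support), so $(x,y) \in \supp(W^{\circ M})$; but since the $B_k$ partition the square and $\supp(W^{\circ M}) \subseteq \bigcup_{k=0}^{M} B_k$ by~\eqref{eq:B_n}, this gives $d_W(x,y) \le M < N$, a contradiction.

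The step I expect to be the main obstacle is making rigorous the claim that "every shrinking neighbourhood of $(x,y)$ having positive $W^{\circ m}$-mass implies $(x,y)\in\supp(W^{\circ m})$," and conversely. The subtlety is that the essential support is only defined up to null sets and is a property of the function $W^{\circ m}$, not literally of the point $(x,y)$; one must argue via a Lebesgue-density / differentiation argument on $[0,1]^2$, or fix a canonical (e.g.\ lower-semicontinuous or Borel) representative of each $W^{\circ m}$ and note that for a measurable set $S$, the set of points $(x,y)$ all of whose neighbourhoods meet $S$ in positive measure differs from $S$ by a null set (indeed equals the set of density-one... density-positive points). Care is also needed because $\overline{\delta}_W$ is \emph{a priori} defined via arbitrary nested neighbourhoods, so one should check the limit is independent of the choice of $X_n, Y_n$ — but the sandwich above, once established, yields exactly this, since $d_W$ makes no reference to a choice. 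I would handle the density argument as a short lemma (or cite the standard fact), then the two inequalities follow as sketched.
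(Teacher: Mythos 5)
Your proof is correct and follows essentially the same route as the paper's: you bound $\delta_W(X_n,Y_n)$ above by the least $m$ with $(x,y)\in\supp(W^{\circ m})$, use Lemma~\ref{lemma:delta-monotonic} to get a non-decreasing integer sequence, and rule out a smaller limit $M$ by observing it would force $(x,y)\in\supp(W^{\circ M})$, contradicting $(x,y)\in B_N$. The ``sandwich'' packaging is a cosmetic rearrangement of the paper's single contradiction argument.

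The step you flagged as the main obstacle is, however, not an obstacle at all given the paper's stated conventions: the introduction declares that ``$\supp$'' denotes the \emph{essential} support of a measurable function, which is by definition the closed set of points $p$ such that every open neighbourhood of $p$ meets $\{f\neq 0\}$ in a set of positive measure. This is a genuinely pointwise notion and does not depend on the a.e.\ representative of $f$ (altering $f$ on a null set does not change which neighbourhoods carry positive mass), so there is no need for a Lebesgue differentiation argument or for fixing a canonical representative: the equivalence ``$(x,y)\in\supp(W^{\circ m})$ iff $\langle\II_X,\calW^m\II_Y\rangle>0$ for all neighbourhoods $X$ of $x$ and $Y$ of $y$'' is the definition, since product sets form a neighbourhood basis of $[0,1]^2$. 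The only genuine technicality is the passage from ``the chosen $X_n\times Y_n$ eventually carry positive $W^{\circ\ell}$-mass'' to ``every neighbourhood does'': this needs the diameters of $X_n$, $Y_n$ to tend to zero, which is ensured by the paper's ``without loss of generality'' reduction to intervals whose intersection is a singleton.
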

\begin{proof}
    Recall that $(x,y) \in \overline{\ess} \supp (W^{\circ m})$ if and only if, for all neighbourhoods $X,Y$ of $x$ and $y$, $W^{\circ m}$ is not zero almost everywhere on $X \times Y$. In other words, since $W^{\circ m}$ is non-negative,
    \begin{align*}
        \int_{X \times Y} W^{\circ m} (s,t) ds dt &= \langle \II_X, \calW^m \II_Y \rangle > 0.
    \end{align*}
    If $(x,y) \in B_m$, then $m$ is the smallest integer such that $(x,y) \in \overline{\ess} \supp (W^{\circ m})$, since by definition, $(x,y)$ is not in $\overline{\ess} \supp (W^{\circ k})$, for all $k < m$. Now, given sequences of neighbourhoods $X_n, Y_n$ of $x$ and $y$, such that $\cap_{n\in\N} X_n = \{x\}$ and $\cap_{n\in\N} Y_n = \{y\}$, we have 
    $$\delta_W (X_n,Y_n) \le m,$$ 
    and $(\delta_W (X_n,Y_n))_{n\in\N}$ is an upper-bounded increasing sequence of integers (by Lemma \ref{lemma:delta-monotonic}), so it must converge to some limit $\ell \in \N$. If $\ell < m$, then $\langle \II_X, \calW^\ell \II_Y\rangle >0$ for all neighbourhoods $X$, $Y$ of $x$ and $y$, which contradicts our hypothesis that $(x,y) \in B_m$.
\end{proof}

\begin{theorem}
    The pointwise Varadhan distance $d_W$ is a metric on $[0,1]$.
\end{theorem}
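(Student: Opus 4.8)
The plan is to verify the four metric axioms, using throughout the characterization of $d_W$ established above: $d_W(x,y)=0$ if $x=y$, and otherwise $d_W(x,y)$ is the least positive integer $m$ with $(x,y)\in\supp(W^{\circ m})$; recall also that the sets $B_n$ partition $[0,1]^2$ because $W$ is connected of finite diameter. Three of the axioms are then immediate. Non-negativity is clear, since $d_W$ is a sum of the non-negative terms $n\,\II_{B_n}$. Because the $B_n$ partition $[0,1]^2$ and $B_0$ is the diagonal, $d_W(x,y)=0$ iff $(x,y)\in B_0$ iff $x=y$. For symmetry, $\calW$ is self-adjoint, so each composition power has a symmetric kernel $W^{\circ m}(x,y)=W^{\circ m}(y,x)$; hence each $\supp(W^{\circ m})$, and therefore each $B_n$, is invariant under $(x,y)\mapsto(y,x)$, which gives $d_W(x,y)=d_W(y,x)$.

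The substance of the proof is the triangle inequality $d_W(x,z)\le d_W(x,y)+d_W(y,z)$. Write $a=d_W(x,y)$ and $b=d_W(y,z)$; if $a=0$ or $b=0$ then $x=y$ or $y=z$ and the inequality is trivial, so assume $a,b\ge 1$. Then $(x,y)\in\supp(W^{\circ a})$ and $(y,z)\in\supp(W^{\circ b})$, and it suffices to show $(x,z)\in\supp(W^{\circ(a+b)})$, since that forces $d_W(x,z)\le a+b$. Thus the theorem reduces to a sub-additivity property of the supports of composition powers: if $(x,y)\in\supp(W^{\circ i})$ and $(y,z)\in\supp(W^{\circ j})$, then $(x,z)\in\supp(W^{\circ(i+j)})$.

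To prove this I would fix intervals $X\ni x$ and $Z\ni z$ and, using $\calW^{i+j}=\calW^i\calW^j$ together with the self-adjointness of $\calW$, write
$$ \int_{X\times Z}W^{\circ(i+j)}(s,t)\,ds\,dt=\langle\calW^i\II_X,\calW^j\II_Z\rangle=\int_0^1(\calW^i\II_X)(w)\,(\calW^j\II_Z)(w)\,dw . $$
Both factors in the final integral are non-negative because $W\ge 0$, and the hypotheses say exactly that $y$ lies in the essential support of $f:=\calW^i\II_X$ and of $g:=\calW^j\II_Z$ (indeed $\int_{X\times Y}W^{\circ i}>0$ for every neighbourhood $Y$ of $y$, and likewise for $g$). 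One then needs the displayed integral to be strictly positive; letting $X$ and $Z$ shrink to $x$ and $z$ will then yield $(x,z)\in\supp(W^{\circ(i+j)})$ and close the argument.

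I expect the main obstacle to be precisely the passage from the fact that $y$ lies in $\supp f\cap\supp g$ to the conclusion that $\int_0^1 fg>0$: this is false for arbitrary non-negative functions, and so must exploit more than the bare connectedness of $W$. The natural tool is the regularity estimate $|(\calW h)(x)-(\calW h)(x')|\le\|h\|_\infty\,r_W(x,x')$, valid for $h\in L^\infty[0,1]$, which shows that each iterate $\calW^i\II_X$ and $\calW^j\II_Z$ is Lipschitz in the neighbourhood distance $r_W$ (the iterates remain bounded since $\|W\|_\infty\le 1$). Passing to the pure graphon model, in which $r_W$ separates points, $y$ becomes a genuine point of continuity of $f$ and $g$, so membership in the essential support upgrades to $f(y)>0$ and $g(y)>0$; then $fg$ is bounded below by a positive constant on an $r_W$-ball about $y$ of positive Lebesgue measure, and the displayed integral is positive. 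This regularity and overlap step is where the structure of $W$, beyond connectedness and finite diameter, genuinely enters the proof.
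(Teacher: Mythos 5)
Your argument takes the same route as the paper's proof: factor $W^{\circ(a+b)}(u,v)=\int_0^1 W^{\circ a}(u,s)W^{\circ b}(s,v)\,ds$ and try to deduce that the support of $W^{\circ(a+b)}$ contains $(x,z)$. You are right that this reduction leaves a nontrivial step, and in fact the paper's own proof silently elides exactly the gap you flag: from $y\in\supp f\cap\supp g$ for nonnegative $f,g$ one cannot conclude $\int_0^1 fg>0$, and the phrase "so that $W^{\circ\ell_1+\ell_2}$ is non-zero on $X\times Z$" is not justified as written. Your proposed repair, however, does not close it. The Lipschitz estimate $|(\calW h)(x)-(\calW h)(x')|\le\|h\|_\infty\, r_W(x,x')$ is correct, but the essential support of $f=\calW^i\II_X$ is a Lebesgue/Euclidean notion, not an $r_W$-notion, and continuity never upgrades essential-support membership to pointwise positivity: $f(t)=|t-y|$ is continuous, has $y\in\supp f$, and yet $f(y)=0$. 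Passing to the pure model does not change this, since $\supp f$ is still computed against Lebesgue measure.

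The deeper problem is that the triangle inequality, as stated for all $x,y,z\in[0,1]$, actually fails on a null set, so no amount of regularity can rescue it in full generality. Take $W=\lift_4(A)$ with $A$ the adjacency matrix of the path $1-2-3-4$ and the uniform partition $I_1,\dots,I_4$; let $x\in I_1^\circ$, $y=\tfrac12$ (the common boundary point of $I_2$ and $I_3$), $z\in I_4^\circ$. Then $(x,y)\in\overline{I_1\times I_2}\subset\supp W$ and $(y,z)\in\overline{I_3\times I_4}\subset\supp W$, so $d_W(x,y)=d_W(y,z)=1$; but the smallest $m$ with $(A^m)_{14}\neq 0$ is $m=3$, hence $d_W(x,z)=3>1+1$. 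Note that the two support inclusions are witnessed by opposite sides of the line $\{s=\tfrac12\}$, which is precisely why your factor functions $f$ and $g$ have disjoint supports near $y$ and $\int fg=0$. The theorem should therefore be read, as propositions 5.8 and 5.9 that follow it, as holding for almost every $x,y$; a rigorous proof must restrict to such points (for instance, Lebesgue density points of the relevant supports), a restriction that both your argument and the paper's leave implicit.
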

\begin{proof}
    Positivity, symmetry and non-degeneracy are immediate. It remains to show the triangle inequality.

    Let $x,y,z\in [0,1]$ and $\ell_1 = d_W (x,y)$, $\ell_2 = d_W (x,y)$. By the definition of $d_W$, there must be some arbitrarily small neighbourhoods $X, Y, Z$ of $x, y$ and $z$ respectively such that $W^{\circ \ell_1}$ is non-zero on $X\times Y$ (similarly for $W^{\circ \ell_2}$ on $Y\times Z$). Then for arbitrary $u,v \in [0,1]$, we may factor $W^{\circ \ell_1 + \ell_2}$ as
    \begin{align*}
        W^{\circ \ell_1 + \ell_2} (u,v) &= \int_{[0,1]} W^{\circ \ell_1} (u,s) W^{\circ \ell_2} (s,v) ds,\\
        &\ge \int_{Y} W^{\circ \ell_1} (u,s) W^{\circ \ell_2} (s,v) ds,
    \end{align*}
    so that $W^{\circ \ell_1 + \ell_2}$ is non-zero on $X\times Z$, hence $d_W (x,y) \le \ell_1 + \ell_2 = d_W(x,y) + d_W (y,z)$. 
 \end{proof}

\begin{proposition}
    Let $A\in\AA_n$ be the adjacency matrix of a simple connected graph $G$ and $W = \lift_n (A)$ its corresponding graphon. Then the Varadhan distance $d_W$ is given for almost all $x,y \in [0,1]$, by
    $$ d_W(x,y) = \begin{cases}
        0 & \text{ if } x=y,\\
        2 & \text{ if } (x\neq y) \wedge (x,y \in I^{(n)}_i),\\
        d_G(i,j) & \text{ if } x\in I^{(n)}_i, y \in I^{(n)}_j~ (i\neq j),
    \end{cases} $$
    where $d_G$ is the shortest-path distance on $G$.
\end{proposition}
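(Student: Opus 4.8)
The plan is to reduce the statement to a computation of the composition powers $W^{\circ m}$ and then translate their supports into walk-counts in $G$. By Proposition~\ref{thm:varadhan-pointwise}, $d_W(x,y)$ equals the least $n\ge 1$ with $(x,y)\in\supp(W^{\circ n})$ when $x\ne y$, and equals $0$ when $x=y$. Since $W=\lift_n(A)=\step_{I^{(n)}}(A)$ lives on the uniform partition (so $\mu_j=1/n$), Lemma~\ref{lemma:step-graphon-powers} gives $W^{\circ m}=\step_{I^{(n)}}(M^{m-1}A)$ with $M=A/n$, i.e. $W^{\circ m}=\lift_n\!\big(\tfrac{1}{n^{m-1}}A^m\big)$; this is precisely Corollary~\ref{coro:powers-step-homogeneous}. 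Hence, up to a null set (the boundaries of the partition cells), $\supp(W^{\circ m})$ is the union of the rectangles $I^{(n)}_i\times I^{(n)}_j$ with $(A^m)_{ij}>0$, and for almost every $(x,y)$ with $x\in I^{(n)}_i$, $y\in I^{(n)}_j$ we have $(x,y)\in\supp(W^{\circ m})$ if and only if $(A^m)_{ij}>0$, that is, if and only if there is a walk of length $m$ from $i$ to $j$ in $G$.

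With this in hand I would finish by an elementary case analysis, discarding at the outset the null set consisting of the diagonal $\{x=y\}=B_0$ together with all cell boundaries, so that $x\ne y$, $x\in\operatorname{int}(I^{(n)}_i)$, $y\in\operatorname{int}(I^{(n)}_j)$. If $i\ne j$, the smallest $m\ge 1$ for which a walk of length $m$ joins $i$ and $j$ is the graph distance $d_G(i,j)$ (a geodesic is such a walk, and no shorter walk connects them), so $d_W(x,y)=d_G(i,j)$. If $i=j$, then $A_{ii}=0$ because $G$ is simple, while $(A^2)_{ii}=\deg_G(i)>0$ since $G$ is connected with at least two vertices (the degenerate case $n=1$ is excluded, as $\lift_1(A)\equiv 0$ is not a connected graphon); hence the smallest admissible $m$ is $2$ and $d_W(x,y)=2$. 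Adding back $d_W(x,y)=0$ for $x=y$ recovers the three cases of the claimed formula. Alternatively, the $i\ne j$ case can be read off directly from the Corollary following Theorem~\ref{thm:delta_W-coarsen}, that $(I^{(n)},\delta_W)$ is isometric to $(G,d_G)$, together with Proposition~\ref{thm:varadhan-pointwise} and the monotonicity Lemma~\ref{lemma:delta-monotonic} used to pass from cells to interior points.

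I expect the only genuinely delicate point to be the null-set bookkeeping: the support of a step function is closed and so contains boundary points of cells even where the neighbouring coefficient vanishes, and $B_0$ (where $d_W=0$) is the measure-zero diagonal; both are Lebesgue-null in $[0,1]^2$, which is exactly why the identity is asserted only for almost all $x,y$. Everything else is a direct substitution into Corollary~\ref{coro:powers-step-homogeneous} and Proposition~\ref{thm:varadhan-pointwise}, so I would keep that part of the write-up brief.
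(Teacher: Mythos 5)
Your proof is correct, and the paper itself states this proposition without proof, so there is no official argument to compare against. Your reduction is the natural one given the machinery the paper has already set up: you use the characterization of $d_W$ via the supports of $W^{\circ m}$ (implicit in equations \eqref{eq:B_n}--\eqref{eq:d_W} and made explicit inside the proof of Proposition~\ref{thm:varadhan-pointwise}), then apply Corollary~\ref{coro:powers-step-homogeneous} to identify $\supp(W^{\circ m})$ with the union of cells $I^{(n)}_i\times I^{(n)}_j$ where $(A^m)_{ij}>0$ up to a Lebesgue-null boundary set, and finish by the standard walk-counting interpretation of powers of $A$. The case split is handled properly: for $i\ne j$ the minimal such $m$ is $d_G(i,j)$; for $i=j$ with $x\ne y$ you correctly use $A_{ii}=0$ (simple graph) and $(A^2)_{ii}=\deg_G(i)>0$ (connectedness, $n\ge 2$), and you rightly flag that $n=1$ is excluded since the resulting graphon is not connected. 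The null-set bookkeeping (diagonal plus cell boundaries, both null in $[0,1]^2$) is exactly what justifies the ``for almost all'' qualifier, and your observation that the essential support of a step function is closed and hence picks up boundary strata is the right reason to phrase it that way. No gaps.
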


\begin{proof}
    The case $x=y$ is trivial. Suppose that $x\neq y$, such that both $x \in I^{(n)}_i$ and $y \in I^{(n)}_j$ lie in the interior of their respective partition sets. 
    
    If $i \neq j$, it is immediate, by Corollary \ref{coro:graphon-step-isometry}, that $d_W(x,y)=d_G(i,j)$. 
    
    If $i=j$, let $k$ be an arbitrary neighbour of $i$ in $G$, and $X, Y \subset I^{(n)}_i$ be arbitrary disjoint neighbourhoods of $x$ and $y$. Clearly $\delta_W (X, I^{(n)}_k) = \delta_W (I^{(n)}_k, Y) = 1$, so $\delta_W (X,Y) \le 2$. In addition, $\langle \II_X, \calW \II_Y \rangle = 0$, so $\delta_W (X,Y) > 1$, hence $\delta_W(X,Y) = 2$. Injecting this into Equation \eqref{eq:delta_W-pointwise} then directly yields $d_W(x,y) = 2$.
\end{proof}

The remaining points on the boundaries of the partition sets form a set of measure zero. If $x$ or $y$ are in the boundaries of multiple sets, then $d_W(x,y)$ is the smallest distance between those sets. The points $(\frac{1}{2},y)$ for $y\in [0,1]$ in Example \ref{ex:varadhan-bipartite} are the prototypical example.

\begin{proposition} \label{prop:d_W-isomorphism}
    Let $W$ be a connected graphon and $\varphi : [0,1] \rightarrow [0,1]$ a measure preserving bijection. Then for almost all $x,y \in [0,1]$,
    $$ d_{W^\varphi} (x,y) = d_W (\varphi(x),\varphi(y)). $$
\end{proposition}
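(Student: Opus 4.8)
The plan is to reduce the claim to the characterization of $d_W$ via the supports of composition powers, established in Proposition~\ref{thm:varadhan-pointwise} (equation~\eqref{eq:d_W}), and then to transport the support sets $B_n$ under $\varphi$ using Corollary~\ref{coro:powers-isomorphism}. Concretely, since $W$ is connected and $\varphi$ is a measure preserving bijection, $W^\varphi$ is isomorphic to $W$ and hence also connected (this follows because isomorphic graphons have unitarily equivalent adjacency operators by Proposition~\ref{prop:isomorphic-graphon-unitary}, and connectedness is characterized spectrally by $\dim(\ker(\calL))=1$, which is preserved under unitary equivalence of the associated operators). Therefore $d_{W^\varphi}$ is a well-defined metric on $[0,1]$ and admits the same support-based description.

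The key computational step is to relate $\supp((W^\varphi)^{\circ n})$ to $\supp(W^{\circ n})$. By Corollary~\ref{coro:powers-isomorphism}, $(W^\varphi)^{\circ n} = (W^{\circ n})^\varphi$ almost everywhere, i.e. $(W^\varphi)^{\circ n}(x,y) = W^{\circ n}(\varphi(x),\varphi(y))$ for a.e. $(x,y)$. Since $\varphi$ is measure preserving, the essential support transforms as $\supp((W^\varphi)^{\circ n}) = (\varphi\times\varphi)^{-1}(\supp(W^{\circ n}))$ up to a null set; I would state this as the elementary fact that for a measure preserving bijection $\varphi$ and a measurable $F\ge 0$, $\supp(F\circ(\varphi\times\varphi)) = (\varphi\times\varphi)^{-1}(\supp F)$ modulo null sets. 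The diagonal $B_0$ is preserved because $\varphi$ is a bijection: $(x,x)\mapsto(\varphi(x),\varphi(x))$ still lies on the diagonal. Then, by induction on $n$ and the definition~\eqref{eq:B_n} of $B_n$ as $\supp(W^{\circ n})\setminus\bigcup_{k<n}B_k$, one gets $B_n^{W^\varphi} = (\varphi\times\varphi)^{-1}(B_n^{W})$ up to null sets, since the set operations (union, complement) commute with the preimage map. Plugging into~\eqref{eq:d_W}, $d_{W^\varphi}(x,y) = \sum_n n\,\II_{B_n^{W^\varphi}}(x,y) = \sum_n n\,\II_{B_n^W}(\varphi(x),\varphi(y)) = d_W(\varphi(x),\varphi(y))$ for almost all $(x,y)$.

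The main obstacle is bookkeeping the almost-everywhere qualifiers carefully. The identities from Corollary~\ref{coro:powers-isomorphism} hold only a.e., and essential supports are themselves only defined up to null sets, so I must make sure that the countably many exceptional null sets (one for each $n$) union to a null set, and that the preimage under $\varphi\times\varphi$ of a null set is null — which holds precisely because $\varphi$ is measure preserving. Once the measure-theoretic plumbing is in place, the inductive identity $B_n^{W^\varphi} = (\varphi\times\varphi)^{-1}(B_n^W)$ and hence the conclusion are routine. One should also note that $\overline{\delta}_{W^\varphi} = d_{W^\varphi}$ and $\overline{\delta}_W = d_W$ by Proposition~\ref{thm:varadhan-pointwise}, so the statement could equivalently be phrased and proved directly in terms of $\overline{\delta}_W$, but working with the $B_n$ makes the transport under $\varphi$ most transparent.
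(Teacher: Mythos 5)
Your proof is correct and follows essentially the same route as the paper's: both apply Corollary~\ref{coro:powers-isomorphism} to transport the essential supports of the composition powers under $\varphi\times\varphi$, deduce $B_n^{W^\varphi}=(\varphi\times\varphi)^{-1}(B_n^W)$ up to null sets, and read off the identity from the definition~\eqref{eq:d_W}. You spell out the measure-theoretic bookkeeping and the connectedness of $W^\varphi$ more explicitly than the paper, but the core argument is identical.
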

\begin{proof}
    By Corollary \ref{coro:powers-isomorphism}, we know that $(W^\varphi)^{\circ n} = (W^{\circ n})^\varphi$, so for almost all $(x,y) \in \supp ((W^\varphi)^{\circ n})$, 
    $$(\varphi(x), \varphi(y)) \in \supp (W^{\circ n}).$$
    By denoting $B_n$ and $B_n^\varphi$ the sets defined by Equation \eqref{eq:B_n} with $W$ and $W^\varphi$ respectively, this implies that 
     $$(\varphi(x),\varphi(y)) \in B_n,$$
    for almost all $(x,y)\in B_n^\varphi$, so $d_{W^\varphi} (x, y) = d_W(\varphi(x),\varphi(y))$.
\end{proof}

Let us comment on why Proposition \ref{prop:d_W-isomorphism} only holds for almost all $(x,y) \in [0,1]$. The application of a measurable bijection to the support of $W^{\circ n}$ can change its boundary, but only by a zero-measure set. Thus, a point in $B_n$ can find itself on the boundary of $B_m^\varphi$, with $m<n$ after going through $\varphi$.

\begin{example}
Let $W = p\II_{[0,1]^2}$ be the Erd\"os-Rényi graphon, for $p\in[0,1]$. The pointwise Varadhan distance is given by
$$ d_W (x,y) = \begin{cases}
    0 & \text{ if }x=y,\\
    1 & \text{ if } x\neq y.
\end{cases} $$ 
Note that $d_W$ does not depend on the parameter $p$. This is consistent with known results about Erd\"os-Rényi random graphs that state that the expected shortest path length between two nodes is proportional to $\frac{\ln (n)}{\ln (pn)}$ \cite{wattsCollectiveDynamicsSmallworld1998}. Taking the limit as $n$ goes to infinity of this expression indeed yields one. Finally, let us remark that any trivially connected graphon will have the same Varadhan distance.
\end{example}

\begin{example} \label{ex:varadhan-bipartite}
Let $W$ be the bipartite graphon. The Varadhan distance is given by
$$
d_W(x,y) = \begin{cases}
    0 & \text{ if } x=y,\\
    1 & \text{ if }(x,y) \in [0,\frac{1}{2}] \times [\frac{1}{2},1] \text{ or } (x,y) \in [\frac{1}{2},1] \times [0,\frac{1}{2}],\\
    2 & \text{else}.
\end{cases} $$

See Figure \ref{fig:ex-varadhan} panels A and B for an illustration.
\end{example}

\begin{example} \label{ex:varadhan-circular}
Let $W$ be the circular band graphon 
$$ W(x,y) = \begin{cases}
    1 & \text{ if } \Delta(x,y) \le \tau,\\
    0 & \text{ if } \Delta(x,y) > \tau,
\end{cases} $$
where $\Delta(x,y) = \min(|x-y|, 1-|x-y|)$ and $\tau>0$. Simple, but tedious calculations show that the composition power $W^{\circ n}$ is piecewise polynomial, and its support is a circular band of width $n\tau$. In the case of $\tau = \frac{1}{7}$, the diameter of $W$ is equal to four, and the Varadhan distance is given by
$$ d_W (x,y) = \lceil \Delta(x,y) / \tau \rceil. $$

See Figure \ref{fig:ex-varadhan} panels C and D for an illustration.
\end{example}

\begin{figure}
    \centering
    \includegraphics[alt={Graphical representations of the bipartite graphon (A) and circular band graphon (C) and of their respective Varadhan distance function, (B and D).},width=.9\textwidth]{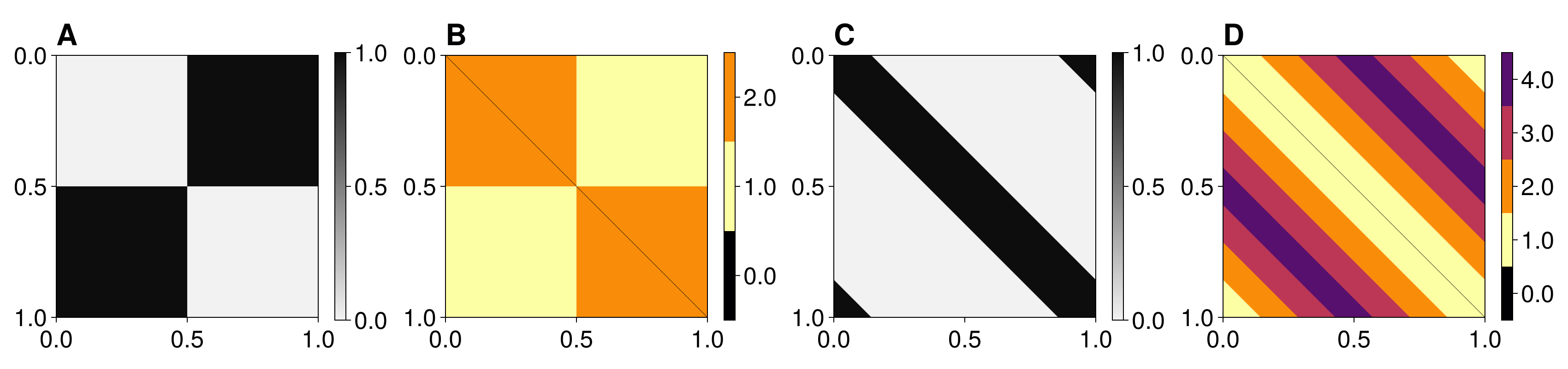}

    \caption{The Varadhan distance from examples \ref{ex:varadhan-bipartite} and \ref{ex:varadhan-circular}. (A) The bipartite graphon. (B) Varadhan distance on the bipartite graphon. (C) The circular band graphon with parameter $\tau=1/7$. (D) The Varadhan distance on the circular band graphon.\\
    Alt text: Graphical representations of the bipartite graphon and circular band graphon and of their respective Varadhan distance function.}
    \label{fig:ex-varadhan}
\end{figure}

\section{The communicability distance on graphons} \label{sec:communicability}

The Varadhan distance developed in the previous sections generalizes the shortest path distance to graphons, but otherwise lacks many desirable properties. In particular, because it is integer-valued, the topology it induces on the unit interval is the discrete topology, which is too strong for many problems of interest. Moreover, as a consequence of Proposition \ref{thm:varadhan-pointwise}, the Varadhan distance only depends on the support of $W$, and its composition powers, so for instance, any trivially connected graphon will have the same pointwise distance as the Erd\"os-Rényi graphon.

In this section, we draw a connection between the Varadhan distance and the \textit{communicability distance} \cite{estradaCommunicabilityDistanceGraphs2012}. Because all composition powers of $W$ appear in its definition, this latter distance includes information from all paths, not just shortest paths, and as a result is more granular than the Varadhan distance. We will however restrict ourselves to the elementary definitions, and reserve the thorough treatment of this matter for follow-up work.

The communicability distance for a finite graph with adjacency matrix $A \in \AA_n$ is defined by
$$ d^2_C (x,y) = (e^A)_{x,x} + (e^A)_{y,y} - 2 (e^A)_{x,y}. $$
This expression is equivalent to the induced metric
$$ d_C (x,y) = \left\| e^{\frac{A}{2}} (\delta_x - \delta_y) \right\|_2, $$
where $\delta_x, \delta_y \in \R^n$ denote canonical basis vectors as in Section \ref{sec:varadhan-graphs}. Furthermore, the communicability distance comes with a natural isometric embedding of the nodes into a Euclidean space, given by $x \mapsto e^{\Lambda/2} v_x$, where $\Lambda \in \R^{n\times n}$ is the diagonal matrix of eigenvalues of $A$, with corresponding eigenvector matrix $V$ such that $A = V\Lambda V^T$ and $v_x = [V_{x,1}, \dots, V_{x,n}]^T \in \R^n$.

All those facts may be straightforwardly extended to graphons. First, for a graphon $W$ with adjacency operator $\calW$, we define the communicability distance between two sets $X,Y \in \frakM$ by
\begin{equation} \label{eq:communicability}
    d_{C,W} (X,Y) = \left\| e^{\frac{\calW}{2}} (\II_X - \II_Y) \right\|_2. 
\end{equation} 
Unlike the quantity $\delta_W$ defined in the previous sections, this is a well-defined metric on $\frakM$. This is because the exponential $e^{\frac{\calW}{2}}$ is invertible, so $d_{C,W}$ is essentially the metric induced by $e^{\frac{\calW}{2}}$ on $L^2 [0,1]$. Let us remark that as a technicality, we need to identify the sets $X,Y \in \frakM$ that have the same essential support, i.e. $\II_X = \II_Y$ almost everywhere. The resulting metric space is not complete, because one may take sets of arbitrarily small measure to construct a sequence of sets $(X_n)_{n\in\N} \subset \frakM$ such that $(\II_{X_n})_{n\in\N}$ converges to the zero function. Adding back the zero measure sets makes the space complete.

It is also possible to extend the communicability distance's isometric embedding to the graphon case. This is because the adjacency operator $\calW$ is compact and self-adjoint, and thus admits a spectral decomposition of the form
\begin{equation} \label{eq:adjacency-eigen} 
    \calW f = \sum_{k=0}^\infty \lambda_k \langle \varphi_k, f \rangle \varphi_k,
\end{equation}
for $f\in L^2 [0,1]$, where $(\lambda_k)_{k\in\N} \subset \R$ are the eigenvalues of $\calW$ and $(\varphi_k)_{k\in\N}$ is the corresponding basis of orthonormal eigenfunctions (which can moreover be shown to be bounded \cite{lovaszGraphon2012}). The exponential can then be expressed as
\begin{equation} \label{eq:communicability-eigen} 
    e^{\frac{\calW}{2}} f = \sum_{k=0}^\infty e^{\frac{\lambda_k}{2}} \langle \varphi_k, f \rangle \varphi_k.
\end{equation}

It is then clear that the map
$$ X \mapsto (e^{\frac{\lambda_k}{2}} \langle \II_X, \varphi_k \rangle )_{k\in\N}, $$
from $\frakM$ to $\ell^2$ is the analogue of the communicability embedding from finite graphs, and is isometric with respect to the communicability distance $d_{C,W}$.

Before concluding, let us comment several points and related work. We could have defined a family of distances similar to \eqref{eq:communicability} by using the heat semigroup $e^{-\calL t}$ instead. Unlike the communicability distance, these would not have isometric embeddings, as the Laplacian operator is not compact and therefore typically does not admit a spectral decomposition as in \eqref{eq:adjacency-eigen}. One way to circumvent this is to use the \textit{normalized Laplacian} $\overline{\calL}$, defined by 
\begin{equation} \label{eq:normalized-laplacian} 
    (\overline{\calL} f)(x) = f(x) - \int_0^1 \frac{W(x,y)}{\sqrt{k(x) k(y)}} f(y) dy
\end{equation}
instead, which would then yield an embedding equivalent to the Diffusion Maps algorithm \cite{coifmanDiffusionMaps2006}. The normalized Laplacian comes with other issues, however, as it is not always well-defined, even for connected graphons. The reason for this is that the degree function of a connected graphon can be equal to zero on a null set, which would make equation \eqref{eq:normalized-laplacian} ill-defined.

\section{Conclusion}

In this work, we have extended the notion of shortest-path distance from finite graphs to graphons by using an analytic characterization, and further highlighted the link between the shortest-path distance and the communicability distance. More precisely, we have constructed the distance function $\delta_W$ by using the exponential operators $e^{\calW t}$, which yields a distance when restricted to measurable partitions, and a pointwise distance on the unit interval when taking appropriate limits. The same exponential operator enabled us to define the communicability distance as a metric on measurable subsets.

Much like the work in \cite{kellerNoteShorttimeBehavior2016}, which proved their result as a consequence of a general theorem about self-adjoint operators, most of our results can be extended to a more general operator setting, which should facilitate extending them to other types of graph limits \cite{backhauszActionConvergenceOperators2022,borgsLpTheorySparse2019}.

There are however some important aspects of graphon theory which we have not addressed here. First, the interpretation of graphons as random graph models is one of their most important applications, and for which characterizing the shortest-path length distribution is of interest \cite{loombaGeodesicLengthDistribution2025}. Second, we have not discussed convergence results of either the Varadhan or communicability distance, for a sequence of graphons converging in the cut-norm or some other topology. Third, there are many other distances on graphs defined in terms of matrices such as the resistance distance \cite{kleinResistanceDistance1993}, forest distance \cite{chebotarevForestMetricsGraph2002} and others \cite{chebotarevClassGraphgeodeticDistances2011,chebotarevWalkDistancesGraphs2012}. We expect most of these to be amenable to being extended to graphons.

We plan to address those points in future work, along with the following. There are many results in spectral graph theory relating the spectrum of a graph to its shortest-path lengths (see for example, chapters 2-3 of \cite{chungSpectralGraphTheory1997} that contain many of the ideas used in our construction of the function $\delta_W$), some of which have been extended to graphons \cite{khetanCheegerInequalitiesGraph2024}. It seems worthwhile to investigate the relation between these results and ours.

\section*{Acknowledgements}

We thank J. Winkin and A. Hastir for their helpful conversations at the onset of this project, as well as F. Garin for stimulating comments in private communications.

\newpage
\appendix

\section{A general Varadhan's formula for graphs} \label{sec:general-varadhan}

In this section, we prove a more general form of Varadhan's formula for graphs. We specifically consider matrices of the form
$$ L = M + D, $$
where $M$ is a matrix with positive entries such that $M_{ij}=0$ iff $A_{ij}=0$ and $D$ is an arbitrary diagonal matrix. This encompasses the adjacency matrix and the various graph Laplacians. 

More precisely, we will prove the following proposition.

\begin{proposition} \label{prop:varadhan-graph-general}
    Let $A\in\AA_n$ be the adjacency matrix of a connected graph and $M \in \R_+^{n\times n}$ a matrix with positive entries such that $M_{ij}=0$ iff $A_{ij}=0$ for all $i,j \in \{1,\dots,n\}$, $D \in \R^{n\times n}$ a diagonal matrix, and $f : \R \rightarrow \R$ a real analytic function around zero such that for $t$ near zero,
    $$ f(t) = \sum_{k=0}^\infty \alpha_k t^k, $$
    with $\alpha_k \neq 0$, for all $k\in \N$. Then 
    \begin{equation} \label{eq:varadhan-general} 
        \lim_{t\rightarrow 0^+} \frac{\log |f(L t)_{ij}|}{\log t} = d(i,j), 
    \end{equation}
    where $d(i,j)$ is the shortest path on the graph corresponding to $A$, $L=M+D$ and
    $$ f(L t) = \sum_{k=0}^\infty \alpha_k t^k L^k. $$
\end{proposition}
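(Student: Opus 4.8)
The plan is to reuse the elementary walk-counting argument from Section~\ref{sec:varadhan-graphs}, adapted so as to allow arbitrary positive edge weights (the entries of $M$), an arbitrary diagonal perturbation $D$, and a general analytic function $f$ in place of the exponential. The combinatorial heart of the matter is the following observation about the expansion of $L^k = (M+D)^k$ as a sum over the $2^k$ products of $k$ factors, each chosen from $\{M,D\}$: in such a product $w_1 w_2 \cdots w_k$, the $(i,j)$-entry equals $\sum \prod_{a=1}^k (w_a)_{v_{a-1} v_a}$, the sum ranging over vertex sequences $i = v_0, v_1, \dots, v_k = j$; a factor $w_a = D$ contributes zero unless $v_{a-1} = v_a$ (as $D$ is diagonal), and a factor $w_a = M$ contributes zero unless $\{v_{a-1}, v_a\}$ is an edge of the graph (as $M$ has the same support as $A$, with $M_{ii} = A_{ii} = 0$, so no $M$-step is stationary). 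Removing the stationary steps of a nonzero term leaves a walk from $i$ to $j$ whose length equals the number of $M$-factors used, whence a nonzero contribution requires at least $d(i,j)$ occurrences of $M$; in particular $(L^k)_{ij} = 0$ for every $k < d(i,j)$.

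Write $d := d(i,j)$, which is finite since the graph is connected. From the observation above, the only length-$d$ word with at least $d$ occurrences of $M$ is $M^d$ itself, so $(L^d)_{ij} = (M^d)_{ij}$; and $(M^d)_{ij} = \sum \prod_a M_{v_{a-1}v_a} > 0$, since every summand is non-negative ($M$ having non-negative entries) and at least one summand — corresponding to a shortest $i$–$j$ path — is strictly positive. Therefore
$$ f(Lt)_{ij} = \sum_{k \ge d} \alpha_k t^k (L^k)_{ij} = \alpha_d (M^d)_{ij}\, t^d + R(t), \qquad R(t) := \sum_{k \ge d+1} \alpha_k t^k (L^k)_{ij}, $$
and the leading coefficient $\alpha_d (M^d)_{ij}$ is nonzero because $\alpha_d \ne 0$ by hypothesis.

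It remains to check that $R(t) = O(t^{d+1})$, which is the only point requiring genuine care and which I regard as the main (minor) obstacle. Since $f$ is analytic on a disc of some radius $\rho > 0$ about the origin, Cauchy's estimates give $|\alpha_k| \le C \rho'^{-k}$ for any fixed $\rho' \in (0,\rho)$ and a suitable constant $C$. Writing $\|\cdot\|$ for the operator norm, for $t < \rho'/\|L\|$ we obtain
$$ |R(t)| \le \sum_{k \ge d+1} |\alpha_k|\, t^k\, \|L\|^k \le C \sum_{k \ge d+1} \left( \frac{\|L\|\, t}{\rho'} \right)^k = C\, \frac{(\|L\|\,t/\rho')^{d+1}}{1 - \|L\|\,t/\rho'} \le C' t^{d+1} $$
for all sufficiently small $t > 0$; the same computation shows the defining series for $f(Lt)$ converges absolutely, so $f(Lt)_{ij}$ is well-defined.

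Consequently $|f(Lt)_{ij}| = |\alpha_d|\,(M^d)_{ij}\, t^d\,(1 + o(1))$ as $t \to 0^+$, and taking logarithms and dividing by $\log t$, which tends to $-\infty$, gives
$$ \frac{\log |f(Lt)_{ij}|}{\log t} = \frac{\log(|\alpha_d|\,(M^d)_{ij}) + \log(1+o(1))}{\log t} + d \;\longrightarrow\; d = d(i,j). $$
The degenerate case $i = j$ is handled identically with $d = 0$, using $(L^0)_{ij} = \delta_{ij}$ and $\alpha_0 \ne 0$. This completes the plan; all steps other than the remainder estimate are routine unwindings of the definitions, exactly as in the scalar graph case.
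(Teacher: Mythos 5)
Your proof is correct and follows essentially the same route as the paper's: expand $L^k=(M+D)^k$ combinatorially, observe that a nonzero $(i,j)$-contribution requires at least $d(i,j)$ factors of $M$ so that $(L^k)_{ij}=0$ for $k<d(i,j)$ while $(L^{d(i,j)})_{ij}=(M^{d(i,j)})_{ij}>0$, bound the remainder to justify convergence and the asymptotics, and then divide logarithms. The only cosmetic difference is that you control the tail via Cauchy's estimates and the operator norm $\|L\|$, whereas the paper uses the cruder entrywise bound $|(L^k)_{ij}|\le K^k n^{k-1}$ and packages the logarithmic step as a separate scalar lemma --- both are standard and equivalent here.
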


This is not the most general result possible. For example, we could flip the sign of $M$, or use the adjacency matrix of a strongly connected directed graph. It will not work for signed adjacency matrices (with positive and negative edges).

We will first need three simple lemmas. The proofs are elementary, and we provide them for completeness. 

\begin{lemma} \label{lemma:log-taylor}
    Let $f(t) = \sum_{k=0}^\infty \alpha_k t^k$ be a real analytic function around zero.
    \begin{enumerate}
        \item If $\alpha_0 \neq 0$, then
        $$ \lim_{t\rightarrow 0^+} \frac{\log |f(t)|}{\log t} = 0. $$
        \item If $\ell \in \N$ is such that $\alpha_\ell \neq 0$ and $\alpha_k = 0, \forall k < \ell$, then
        $$ \lim_{t\rightarrow 0^+} \frac{\log |f(t)|}{\log t} = \ell. $$
    \end{enumerate} 
\end{lemma}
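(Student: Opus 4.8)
The plan is to handle the two cases in sequence, with part~1 serving as the engine for part~2. For part~1, I would note that since $f$ is analytic (hence continuous) at $0$ with $f(0) = \alpha_0 \neq 0$, there is a punctured neighbourhood of $0$ on which $f$ does not vanish, so $\log|f(t)|$ is well-defined for all small $t > 0$ and $\log|f(t)| \to \log|\alpha_0|$ as $t \to 0^+$. Since the numerator converges to a finite constant while $\log t \to -\infty$, the quotient $\tfrac{\log|f(t)|}{\log t}$ tends to $0$.

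For part~2, the idea is to factor out the leading power. Write $f(t) = t^\ell g(t)$, where $g(t) = \sum_{k=0}^\infty \alpha_{\ell+k}\, t^k$ is again real analytic near $0$ (its series differs from that of $f$ only by a shift of index and the factor $t^\ell$, so it has the same radius of convergence) and satisfies $g(0) = \alpha_\ell \neq 0$. For $t > 0$ small enough that $g(t) \neq 0$, we then have
$$ \frac{\log|f(t)|}{\log t} = \frac{\ell \log t + \log|g(t)|}{\log t} = \ell + \frac{\log|g(t)|}{\log t}. $$
Applying part~1 to $g$ shows that the last term vanishes in the limit, which yields $\ell$.

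I do not expect any real obstacle here; the statement is elementary. The only points that deserve a word of care are (i) checking that $\log|f(t)|$ is meaningful, i.e. that $f$ does not vanish on a punctured neighbourhood of the origin — which follows in both cases from continuity together with the non-vanishing of the relevant leading coefficient ($\alpha_0$ in part~1, $g(0) = \alpha_\ell$ in part~2) — and (ii) that in part~2 one may legitimately split the logarithm as above, which is valid precisely on the interval where both $t > 0$ and $g(t) \neq 0$.
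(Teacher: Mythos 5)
Your proof is correct and follows essentially the same route as the paper's: establish part~1 from continuity of $f$ and non-vanishing of $\alpha_0$, then prove part~2 by factoring out $t^\ell$ and reducing to part~1 applied to the remaining analytic function $g$.
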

\begin{proof}
    In the first case, it is obvious that $f(0)=\alpha_0$ and $|f(t)|$ is analytic in a neighborhood of zero, so $\lim_{t\rightarrow 0^+} |f(t)| = |\alpha_0| > 0$, and since the logarithm function is continuous on $(0,+\infty)$, $\log |f(t)|$ converges to $\log |\alpha_0|$ as $t$ goes to zero. It is then immediate that
    $$ \lim_{t \rightarrow 0^+} \frac{\log |f(t)|}{\log t} = 0. $$
    
    In the second case, we may factor $f(t)$ as
    $$ f(t) = t^\ell \left( \sum_{k=0}^\infty \alpha_{\ell+k} t^k \right), $$
    hence for $t>0$, $\log |f(t)| = \ell \log t + \log |\sum_{k=0}^\infty \alpha_{\ell+k} t^k|$ and so,
    \begin{align*}
        \lim_{t\rightarrow 0^+} \frac{\log |f(t)|}{\log t} &= \lim_{t\rightarrow 0^+} \frac{\ell \log t}{\log t} + \lim_{t \rightarrow 0^+} \frac{\log |\sum_{k=0}^\infty \alpha_{\ell+k} t^k|}{\log t},\\
        &= \ell,
    \end{align*}
    where the second limit is zero by the first point of this lemma.
\end{proof}

\begin{lemma} \label{lemma:weighted-adjacency-powers}
    Let $A \in \AA_n$ be the adjacency of a connected simple graph and let $M \in \R_+^{n\times n}$ be such that $M_{ij}=0$ if and only if $A_{ij}=0$ for all $i,j\in \{1,\dots,n\}$. Then,
    $$ (M^m)_{ij} = 0 \; \Leftrightarrow \; (A^m)_{ij} = 0, $$
    for all $m\in \N$, and all $i,j \in \{1,\dots,n\}$.
\end{lemma}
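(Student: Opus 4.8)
The plan is to exploit the fact that all entries of both $A$ and $M$ are non-negative, so that no cancellation can occur when forming matrix products. Under this condition, whether a given entry of a matrix power vanishes depends only on the \emph{zero pattern} of the matrix, and $A$ and $M$ share that pattern by hypothesis.

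Concretely, I would proceed by induction on $m$. The cases $m=0$ (both powers equal the identity) and $m=1$ (the hypothesis $M_{ij}=0 \Leftrightarrow A_{ij}=0$) are immediate. For the inductive step, write
$$ (M^{m+1})_{ij} = \sum_{k=1}^n (M^m)_{ik}\, M_{kj}, \qquad (A^{m+1})_{ij} = \sum_{k=1}^n (A^m)_{ik}\, A_{kj}. $$
Since $M \in \R_+^{n\times n}$ and $A \in \AA_n$ has non-negative entries, every summand on either side is non-negative, so each sum is zero if and only if every one of its summands is zero. By the induction hypothesis $(M^m)_{ik}=0 \Leftrightarrow (A^m)_{ik}=0$, and by the base case $M_{kj}=0 \Leftrightarrow A_{kj}=0$; hence for each $k$ the product $(M^m)_{ik} M_{kj}$ vanishes precisely when $(A^m)_{ik} A_{kj}$ does. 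Therefore $(M^{m+1})_{ij}=0 \Leftrightarrow (A^{m+1})_{ij}=0$, closing the induction.

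As an alternative, one can expand both powers directly as sums over length-$m$ walks, $(A^m)_{ij} = \sum_{k_1,\dots,k_{m-1}} A_{ik_1}A_{k_1k_2}\cdots A_{k_{m-1}j}$ and likewise for $M$, and observe the term-by-term correspondence of vanishing together with the non-negativity of all terms. I do not expect a genuine obstacle here; the only point requiring care is to invoke non-negativity explicitly, since that is exactly what excludes the cancellations which would otherwise break the equivalence (the same subtlety that makes the statement fail for signed adjacency matrices, as noted after Proposition \ref{prop:varadhan-graph-general}).
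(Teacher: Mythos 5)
Your proposal is correct and takes essentially the same route as the paper: induction on $m$ with base cases $m=0,1$, expanding $(M^{m+1})_{ij}$ and $(A^{m+1})_{ij}$ as sums and using non-negativity of the entries so that vanishing of the sum is equivalent to vanishing of every summand. You are slightly more explicit than the paper about invoking non-negativity and about checking both directions of the equivalence, which is a small improvement in rigour, but the underlying argument is the same.
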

\begin{proof}
    The case $m=0$ is trivial, and we have the case $m=1$ by hypothesis. We proceed by induction.

    Let $m \in \N$ be such that $(M^m)_{ij} = 0$ iff $(A^m)_{ij}=0$. Then, we have
    \begin{align*}
        (M^{m+1})_{ij} &= \sum_{k=1}^n (M^m)_{ik} M_{kj},\\
        (A^{m+1})_{ij} &= \sum_{k=1}^n (A^m)_{ik} A_{kj}.
    \end{align*}
    If $(M^{m+1})_{ij} \neq 0$, there must be some $k \in \{1,\dots,n\}$ such that $(M^m)_{ik} \neq 0$ and $M_{kj} \neq 0$. By the hypotheses, this is equivalent to $(A^m)_{ik}\neq 0$ and $A_{kj}\neq 0$, so $A^{m+1}_{ij}\neq 0$.
\end{proof}

Since the matrices $M$ and $D$ in proposition \ref{prop:varadhan-graph-general} do not necessarily commute, we will need to handle it explicitly when manipulating powers of $L$. For $m\ge 1$, let $\mathcal{B}_m = \{0,1\}^m$ be the set of all binary $m$-tuples. For $b \in \mathcal{B}_m$ and two matrices $A,B \in \R^{n\times n}$, we define
$$ \pi_b (A,B) = \prod_{k=1}^m A^{1-b_k} B^{b_k}, $$
to be the product of $m$ terms taken from $\{A,B\}$ indexed by $b$, e.g. $\pi_{(1,0,0)}(A,B) = BA^2$.

\begin{lemma} \label{lemma:noncommutative-binomial-formula}
    Let $A,B \in \R^{n\times n}$ be arbitrary matrices. Then for all $m\ge 1$, 
    $$ (A+B)^m = \sum_{b\in \mathcal{B}_m} \pi_b (A,B). $$
\end{lemma}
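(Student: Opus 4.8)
The plan is to prove the noncommutative binomial expansion by a direct induction on $m$. The base case $m=1$ is immediate: $\mathcal{B}_1 = \{(0),(1)\}$, and $\pi_{(0)}(A,B) = A$, $\pi_{(1)}(A,B) = B$, so the sum on the right is $A+B$, matching $(A+B)^1$.

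For the inductive step, I would assume $(A+B)^m = \sum_{b\in\mathcal{B}_m} \pi_b(A,B)$ and multiply on the right by $(A+B)$, giving
\[
(A+B)^{m+1} = \left(\sum_{b\in\mathcal{B}_m} \pi_b(A,B)\right)(A+B) = \sum_{b\in\mathcal{B}_m} \pi_b(A,B)\,A + \sum_{b\in\mathcal{B}_m} \pi_b(A,B)\,B.
\]
The key combinatorial observation is that the map $\mathcal{B}_m \times \{0,1\} \to \mathcal{B}_{m+1}$ sending $(b, c)$ to the $(m+1)$-tuple $b' = (b_1,\dots,b_m,c)$ is a bijection, and under this correspondence $\pi_b(A,B)\cdot A^{1-c}B^{c} = \pi_{b'}(A,B)$, simply because appending one more factor to the product $\prod_{k=1}^m A^{1-b_k}B^{b_k}$ is exactly multiplying by $A^{1-c}B^c$ on the right. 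Splitting the sum over $\mathcal{B}_{m+1}$ according to the last coordinate $b'_{m+1} \in \{0,1\}$ therefore reproduces exactly the two sums above, which completes the induction.

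I do not expect any genuine obstacle here; the statement is essentially a bookkeeping identity, the only mild subtlety being to fix a consistent convention (multiplying on the right and recording the new factor in the last coordinate) so that the bijection between $\mathcal{B}_m\times\{0,1\}$ and $\mathcal{B}_{m+1}$ is clean. One could equally multiply on the left and index by the first coordinate; either works, and I would just pick one and be consistent with the definition of $\pi_b$ given in the paper.

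\begin{proof}
    We argue by induction on $m$. For $m=1$, $\mathcal{B}_1 = \{(0),(1)\}$ and $\pi_{(0)}(A,B) = A$, $\pi_{(1)}(A,B) = B$, so $\sum_{b\in\mathcal{B}_1}\pi_b(A,B) = A + B = (A+B)^1$.

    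Assume the identity holds for some $m\ge 1$. Every $b' \in \mathcal{B}_{m+1}$ can be written uniquely as $b' = (b_1,\dots,b_m,c)$ with $b = (b_1,\dots,b_m) \in \mathcal{B}_m$ and $c \in \{0,1\}$, and by the definition of $\pi_{b'}$ we have
    $$ \pi_{b'}(A,B) = \left(\prod_{k=1}^m A^{1-b_k}B^{b_k}\right) A^{1-c}B^{c} = \pi_b(A,B)\, A^{1-c}B^{c}. $$
    Hence, splitting the sum over $\mathcal{B}_{m+1}$ according to the value of $c$,
    \begin{align*}
        \sum_{b'\in\mathcal{B}_{m+1}} \pi_{b'}(A,B) &= \sum_{b\in\mathcal{B}_m}\pi_b(A,B)\,A + \sum_{b\in\mathcal{B}_m}\pi_b(A,B)\,B,\\
        &= \left(\sum_{b\in\mathcal{B}_m}\pi_b(A,B)\right)(A+B),\\
        &= (A+B)^m (A+B) = (A+B)^{m+1},
    \end{align*}
    where the third equality uses the induction hypothesis. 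This completes the induction.
\end{proof}
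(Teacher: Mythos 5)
Your proof is correct and uses essentially the same inductive argument as the paper, the only cosmetic difference being that you multiply by $(A+B)$ on the right and split $\mathcal{B}_{m+1}$ by the last coordinate, whereas the paper multiplies on the left and splits by the first coordinate. Either convention works, as you yourself note.
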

\begin{proof}
    The case $m=1$ is immediate, and we proceed by induction. Suppose that
    $$ (A+B)^m = \sum_{b\in \mathcal{B}_m} \pi_b (A,B), $$
    for $m\ge 1$, then
    \begin{align*}
        (A+B)^{m+1} &= (A+B) (A+B)^m,\\
        &= A \sum_{b\in \mathcal{B}_m} \pi_b (A,B) + B \sum_{b\in \mathcal{B}_m} \pi_b (A,B).
    \end{align*}
    Since any element of $\mathcal{B}_{m+1}$ can be expressed as either $(0, b_1,\dots,b_m)$ or $(1,b_1,\dots,b_m)$, where $(b_1,\dots,b_m)\in \mathcal{B}_m$, we see that all elements of $\mathcal{B}_{m+1}$ are accounted for in the last expression.
\end{proof}

We may finally prove proposition \ref{prop:varadhan-graph-general}.

\begin{proof}
    We first need to show that $f(L t)$ is well-defined in a neighbourhood of zero. We can expand $f(Lt)_{ij}$ into
    $$ f(Lt)_{ij} = \sum_{k=0}^\infty \alpha_k t^k (L^k)_{ij}. $$
    Let $K = \max_{i,j} |L_{ij}|$. We can bound $L$ entrywise by the $n\times n$ matrix with all entries equal to $K$ and consequently,
    $$ |(L^k)_{ij}| \le K^k n^{k-1}, $$
    where the term $n^{k-1}$ corresponds to the number of walks of length $k$ in the complete graph with self-loops with $n$ vertices. Thus,
    $$ \sum_{k=0}^\infty |\alpha_k t^k (L^k)_{ij}| \le \frac{1}{n} \sum_{k=0}^\infty |\alpha_k| (Knt)^k = \frac{1}{n} f(Knt). $$
    Since $K$ and $n$ are constants, there exists a neighbourhood $U$ of zero such that $f(Knt)$ is well-defined for all $t\in U$. The series $f(Lt)_{ij}$ is then absolutely convergent, and therefore converges for all $t\in U$.

    Next, by using lemmas \ref{lemma:noncommutative-binomial-formula} and \ref{lemma:weighted-adjacency-powers}, we know that $(L^m)_{ij} = 0$ for $m<d(i,j)$, and $(L^{d(i,j)})_{ij} = (M^{d(i,j)})_{ij} > 0$. This means that the function $t \mapsto f(Lt)_{ij}$ satisfies 
    $$ f(Lt)_{ij} = \sum_{k=d(i,j)}^\infty \alpha_k t^k (L^k)_{ij}, $$
    and we can apply lemma \ref{lemma:log-taylor} to conclude the proof.
\end{proof}

\section{Properties of step graphons} \label{sec:step-graphons}

This appendix states and proves several elementary properties of step graphons which are used in the proof of Theorem 4.1 in the main text.

\begin{lemma}[Action of step graphon adjacency operators] \label{lemma:step-graphon-adjacency-app}
    Let $A\in \AA_n$ and $P \in \calP_n$ and let $W = \step_P (A)$. Then the adjacency operator $\calW = \adj (W)$ acts on $f \in L^2 [0,1]$ as
    $$ \calW f = \sum_{i=1}^n \left( \sum_{j=1}^n A_{ij} \mu_j f_j \right) \II_{P_i}, $$
    where $\mu_i = \mu(P_i)$ and $f_i = \frac{1}{\mu_i} \int_{P_i} f(x) dx$.
\end{lemma}
\begin{proof}
    Let $x \in P_i \subset [0,1]$. By definition of the adjacency operator, we have
    \begin{align*}
        (\calW f)(x) &= \int_0^1 W(x,y) f(y) dy = \sum_{j=1}^n \int_{P_j} W(x,y) f(y) dy\\
        & = \sum_{j=1}^n A_{ij} \int_{P_j} f(y) dy = \sum_{j=1}^n A_{ij} \mu_j f_j.
    \end{align*}
    This is valid for any $x \in P_i$, so we have just shown that 
    $$ (x \in P_i) \Rightarrow (\calW f)(x) = \sum_{j=1}^n A_{ij} \mu_j f_j, $$
    for all $i \in \{1,\dots,n\}$, hence we obtain the desired result.
\end{proof}

From the above lemma, we see that $\calW$ acts on the finite dimensional subspace $\mathcal{S}_P$ of $P$-step functions as the matrix $(M_{ij}) = (A_{ij} \mu_j)$. Moreover, $\mathcal{S}_P$ is the range of $\calW$.

Applying this lemma to the constant function $\II_{[0,1]}$ immediately gives the following result.
\begin{lemma}[Degree function of step graphons]
    Let $A\in \AA_n$, $P \in \calP_n$ and $W = \step_P (A)$. The degree function of $W$ is given by
    $$ k = \sum_{i=1}^n k_i \II_{P_i}, $$
    where $k_i = \sum_{j=1}^n A_{ij} \mu_j$.
\end{lemma}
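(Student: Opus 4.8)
The plan is to reduce this statement directly to the preceding lemma on the action of step graphon adjacency operators. The key observation is that the degree function of any graphon $W$ can be written as $k(x) = \int_0^1 W(x,y)\,dy = (\calW \II_{[0,1]})(x)$, i.e.\ it is the image of the constant function $\II_{[0,1]}$ under the adjacency operator. So the entire proof amounts to specializing the formula
$$ \calW f = \sum_{i=1}^n \left( \sum_{j=1}^n A_{ij} \mu_j f_j \right) \II_{P_i} $$
to the choice $f = \II_{[0,1]}$.

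First I would compute the coefficients $f_j$ for this particular $f$. Since $\II_{[0,1]} \equiv 1$ on all of $[0,1]$, we get $f_j = \frac{1}{\mu_j} \int_{P_j} \II_{[0,1]}(x)\,dx = \frac{1}{\mu_j} \mu_j = 1$ for every $j \in \{1,\dots,n\}$. Substituting $f_j = 1$ into the inner sum yields $\sum_{j=1}^n A_{ij} \mu_j$, which is exactly the claimed value $k_i$. Hence $\calW \II_{[0,1]} = \sum_{i=1}^n k_i \II_{P_i}$, and since this equals $k$ by the definition above, the lemma follows.

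There is essentially no obstacle here: the content is entirely carried by the previous lemma, and the only computation — evaluating the averages $f_j$ of the constant function — is immediate. The one point worth stating carefully is that the identity $k = \calW \II_{[0,1]}$ is a genuine equality of functions (not just almost everywhere an issue), so that the piecewise-constant expression $\sum_i k_i \II_{P_i}$ is the correct representative of the degree function on each partition cell $P_i$.
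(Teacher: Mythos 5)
Your proof is correct and follows exactly the route the paper takes: the paper proves this lemma by the one-line remark that applying the preceding lemma to the constant function $\II_{[0,1]}$ immediately gives the result. Your spelled-out computation that $f_j = 1$ for the constant function is the (trivial) content of that remark.
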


\begin{lemma}[Powers of step graphons] \label{lemma:step-graphon-powers-app}
    Let $A \in \AA_n$, $P \in \calP_n$ and $m \ge 1$. Then
    $$ (\step_P(A))^{\circ m} = \step_P (M^{m-1}A), $$
    where $M_{ij} = A_{ij} \mu_j$.
\end{lemma}
\begin{proof}
    Let $W = \step_P (A)$ and $\calW = \adj (W)$. By lemma \ref{lemma:step-graphon-adjacency-app}, we know that $\operatorname{range}(\calW) = \mathcal{S}_P$ and that for $f = \sum_{i=1}^n f_i \II_{P_i} \in \mathcal{S}_P$,
    $$ \calW f = \sum_{i=1} (M [f])_i \II_{P_i}, $$
    where $[f] = [f_1,\dots,f_n]^T \in \R^n$. From there it is immediate that
    $$ \calW^m f = \sum_{i=1}^n (M^m [f])_i \II_{P_i}. $$

    Now let $Z = \step_P (M^{m-1} A)$ and $\mathcal{Z} = \adj(Z)$. By lemma \ref{lemma:step-graphon-adjacency-app}, we know that
    \begin{align*}
        \mathcal{Z} f &= \sum_{i=1}^n \left( \sum_{j=1}^n (M^{m-1} A)_{ij} \mu_j f_j \right) \II_{P_i}\\
        &= \sum_{i=1}^n (M^m [f])_i \II_{P_i} = \calW^m f,
    \end{align*}
    from which we conclude that $W^{\circ m} = \step_P (M^{m-1}A)$ almost everywhere.
\end{proof}

\begin{corollary} \label{coro:powers-step-homogeneous}
    Let $A \in \AA_n$ and $P \in \calP_n$ be a homogeneous partition. Then
    $$ (\step_P(A))^{\circ m} = \step_P \left(\frac{1}{n^{m-1}}A^m \right), $$
\end{corollary}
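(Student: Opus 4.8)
This is an immediate specialization of Lemma~\ref{lemma:step-graphon-powers}. The plan is to observe that a homogeneous partition is precisely one in which every block has the same measure, so that $\mu_i = \mu(P_i) = 1/n$ for all $i$.

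First I would recall the conclusion of Lemma~\ref{lemma:step-graphon-powers}: for any $A \in \AA_n$, any $P \in \calP_n$ and any $m \ge 1$,
$$ (\step_P(A))^{\circ m} = \step_P(M^{m-1} A), \qquad M_{ij} = A_{ij}\,\mu_j. $$
Then I would substitute $\mu_j = 1/n$, which gives $M = \frac{1}{n} A$ as an honest matrix identity (not merely entrywise, since the scalar $1/n$ is common to every entry). Consequently $M^{m-1} = \frac{1}{n^{m-1}} A^{m-1}$, and therefore
$$ M^{m-1} A = \frac{1}{n^{m-1}} A^{m-1} A = \frac{1}{n^{m-1}} A^m. $$
Plugging this back into the lemma yields $(\step_P(A))^{\circ m} = \step_P\!\left(\frac{1}{n^{m-1}} A^m\right)$, which is the claim.

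There is essentially no obstacle here; the only point worth stating carefully is that "homogeneous partition" means all blocks have equal measure $1/n$, and that this turns the weighting matrix $M$ of Lemma~\ref{lemma:step-graphon-powers} into a scalar multiple of $A$, at which point matrix powers commute with the scalar and the computation closes. (One could also give a fully self-contained induction mirroring the proof of Lemma~\ref{lemma:step-graphon-powers}, but invoking that lemma directly is cleaner.)
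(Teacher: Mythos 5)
Your proof is correct and follows the same route as the paper: identify $\mu_j = 1/n$ from homogeneity, deduce $M = \tfrac{1}{n}A$, and apply Lemma~\ref{lemma:step-graphon-powers}. The only difference is that you spell out the intermediate computation $M^{m-1}A = \tfrac{1}{n^{m-1}}A^m$, which the paper leaves implicit.
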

\begin{proof}
    Homogeneity of the partition implies $\mu(P_i) = \frac{1}{n}$ for all $i \in \{1,\dots,n\}$, hence the matrix $M$ is simply $M = \frac{1}{n}A$, and applying lemma \ref{lemma:step-graphon-powers-app} immediately gives the desired result.
\end{proof}

\bibliography{sources}

\end{document}